\newtheorem{thm}{Theorem}
\newtheorem{prop}[thm]{Proposition}
\newtheorem{lem}[thm]{Lemma}
\newcommand{\armh}{L_{\textrm{horiz}}}
\newcommand{\armv}{L_{\textrm{vert}}}
\newcommand{\rotmat}[1]{\textrm{Rot}_{#1}}
\newcommand{\R}{\mathbb{R}}
\newcommand{\alcc}{\lambda^*}
\newcommand{\sofaconst}{\mu_\textrm{MS}}
\newcommand{\sofaconstbeta}{\mu_*}
\newcommand{\gerverconst}{\mu_\textrm{G}}
\newcommand{\romikconst}{\mu_\textrm{R}}
\newcommand{\bestupperbound}{2.37}
\definecolor{varcolor}{RGB}{0,0,160}
\definecolor{funccolor}{RGB}{0,160,0}
\definecolor{codebgcolor}{RGB}{240,240,240}
\definecolor{inputcolor}{RGB}{0,50,160}
\newcommand{\varname}[1]{{\color{varcolor}\textnormal{\texttt{#1}}}}
\newcommand{\keyword}[1]{\textbf{#1}}
\newcommand{\inputline}[1]{{\color{inputcolor}\textnormal{\texttt{#1}}}}
\title{Improved upper bounds in the moving sofa problem}
\author{Yoav Kallus\footnote{Santa Fe Institute, 1399 Hyde Park Road, Santa Fe, NM 87501, USA.} \and Dan Romik\footnote{Department of Mathematics, University of California, Davis, One Shields Ave, Davis, CA 95616, USA. Email: \texttt{romik@math.ucdavis.edu}}}
\begin{document}
\maketitle

\begin{abstract}
The moving sofa problem, posed by L.~Moser in 1966, asks for the planar shape of maximal area that can move around a right-angled corner in a hallway of unit width. It is known that a maximal area shape exists, and that its area is at least $2.2195\ldots$---the area of an explicit construction found by Gerver in 1992---and at most $2\sqrt{2}\approx 2.82$, with the lower bound being conjectured as the true value. We prove a new and improved upper bound of $\bestupperbound$. The method involves a computer-assisted proof scheme that can be used to rigorously derive further improved upper bounds that converge to the correct value.
\end{abstract}

\renewcommand{\thefootnote}{\fnsymbol{footnote}} 
\footnotetext{\emph{Key words:} moving sofa problem, geometric optimization, branch-and-bound, computer-assisted proof, experimental mathematics.
}     
\footnotetext{\emph{2010 Mathematics Subject Classification:} 
49Q10.
}
\renewcommand{\thefootnote}{\arabic{footnote}}

\section{Introduction}

The \textbf{moving sofa problem} is a well-known unsolved problem in geometry, first posed by Leo Moser in 1966 \cite{unsolved-problems, moser}. It asks:

\begin{quote}
\textit{What is the planar shape of maximal area that can be moved around a right-angled corner in a hallway of unit width?}
\end{quote}

We refer to a connected planar shape that can be moved around a corner in a hallway as described in the problem as a \textbf{moving sofa shape}, or simply a \textbf{moving sofa}.
It is known \cite{gerver} that a moving sofa of maximal area exists. The shape of largest area currently known is an explicit construction found by Joseph Gerver in 1992  \cite{gerver} (see also \cite{romik} for a recent perspective on Gerver's results), known as \textbf{Gerver's sofa} and shown in Figure~\ref{fig:gerver}. Its area is \textbf{Gerver's constant} 
$$\gerverconst = 2.21953166\ldots,$$ an exotic mathematical constant that is defined in terms of a certain system of transcendental equations but which does not seem to be expressible in closed form.
\begin{figure}
\begin{center}
\scalebox{0.85}{\includegraphics{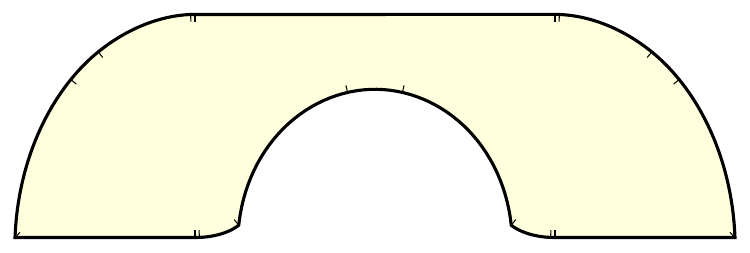}}
\caption{Gerver's sofa, conjectured to be the solution to the moving sofa problem. Its boundary is made up of 18 curves, each given by a separate analytic formula; the tick marks show the points of transition between different analytic pieces of the boundary.}
\label{fig:gerver}
\end{center}
\end{figure}
Gerver conjectured that $\gerverconst$ is the largest possible area for a moving sofa, a possibility supported heuristically by the local-optimality considerations from which his shape was derived. 

Gerver's construction provides a lower bound on the maximal area of a moving sofa. In the opposite direction, it was proved by Hammersley \cite{hammersley} in 1968 that a moving sofa cannot have an area larger than $2\sqrt{2}\approx 2.82$. It is helpful to reformulate these results by denoting
$$ \sofaconst = \max \Big\{ \operatorname{area}(S)\,:\, S \textrm{ is a moving sofa shape} \Big\}, $$
the so-called \textbf{moving sofa constant} (see Finch's book \cite[Sec.~8.12]{finch}; note that Finch refers to Gerver's constant $\gerverconst$ as the ``moving sofa constant,'' but this terminology currently seems unwarranted in the absence of a proof that the two constants are equal.) The above-mentioned results then translate to the statement that
$$ \gerverconst \le \sofaconst \le 2\sqrt{2}. $$

The main goal of this paper is to derive improved upper bounds for $\sofaconst$. We prove the following explicit improvement to Hammersley's upper bound from 1968.

\begin{thm}[New area upper bound in the moving sofa problem] \label{thm:new-upperbound}
The moving sofa constant $\sofaconst$ satisfies the bound
\begin{equation} \label{eq:upperbound} 
\sofaconst \le \bestupperbound.
\end{equation}
\end{thm}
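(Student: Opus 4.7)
The plan is to exploit the fact that in the reference frame of the moving sofa, the L-shaped hallway $\hallway$ traces out a continuous one-parameter family of rotated and translated copies $\hallway_\alpha$, indexed by the accumulated rotation angle $\alpha \in [0,\pi/2]$, each of which must fully contain the sofa. Hence
$$ \sofaconst \le \sup \operatorname{area}\!\left( \bigcap_{\alpha \in [0,\pi/2]} \hallway_\alpha \right), $$
where the supremum runs over continuous trajectories $\alpha \mapsto (x(\alpha), y(\alpha))$ describing the position of the inner corner of the hallway in the sofa's frame. Hammersley's bound $2\sqrt{2}$ is essentially the consequence of only using the two endpoint angles $\alpha = 0$ and $\alpha = \pi/2$; the goal is to squeeze more out of this inclusion by sampling more angles.

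Concretely, I would pick a finite grid $0 = \alpha_0 < \alpha_1 < \cdots < \alpha_n = \pi/2$ and relax the constraint to hold only at those angles, giving
$$ \sofaconst \le \sup_{(x_i,y_i)_{i=0}^n} \operatorname{area}\!\left( \bigcap_{i=0}^n \hallway_{\alpha_i}(x_i,y_i) \right), $$
where $\hallway_{\alpha}(x,y)$ denotes the L-hallway rotated by $\alpha$ and translated so its inner corner sits at $(x,y)$. Without loss of generality (by the standard symmetrization argument---averaging any sofa with its reflection across the angular midline preserves the moving property and cannot decrease area), I may assume the configuration is symmetric under this reflection, which roughly halves the number of free parameters and fixes a canonical gauge. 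The right-hand side is then a finite-dimensional optimization over a compact box.

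The next step is to attack that optimization by a rigorous branch-and-bound search. For each sub-box of parameter space, I compute a certified upper bound on $\operatorname{area}\!\bigl(\bigcap_i \hallway_{\alpha_i}(x_i,y_i)\bigr)$ that is valid for \emph{every} parameter tuple in the sub-box---for instance, by replacing each bounding half-plane by its most permissive position over the sub-box, and then computing the area of the resulting enlarged convex/polygonal region in interval arithmetic. Sub-boxes whose certified upper bound is already $\le \bestupperbound$ are pruned; the rest are bisected and requeued; the process terminates when every surviving sub-box has been certified.

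The main obstacle is entirely computational rather than conceptual. The parameter-space dimension grows linearly with $n$, and enough angles (with well-chosen spacing) must be used to drive the global supremum below $\bestupperbound$, while the branch-and-bound tree must remain small enough to execute and verify. Therefore the craft of the proof lies in selecting the angle set $\{\alpha_i\}$ judiciously, exploiting every available symmetry and monotonicity reduction to shrink the feasible region, and implementing the per-box area upper bound tightly enough that only moderate subdivision is required. Once this apparatus is assembled, Theorem~\ref{thm:new-upperbound} follows by running the certified search and reading off that no surviving sub-box exceeds $\bestupperbound$.
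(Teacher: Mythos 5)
Your proposal captures the computational core of the paper's argument (replace the continuum of hallway positions by a finite sample of angles, then run a certified branch-and-bound over the resulting finite-dimensional space), but it has two genuine gaps, one of which the paper must work to circumvent.

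\paragraph{Gap 1: the maximal sofa need not rotate by $\pi/2$.}
Your relaxation takes the grid of angles all the way from $0$ to $\pi/2$ and requires the sofa to fit inside $L_{\alpha_i}(x_i,y_i)$ for every $i$, including $\alpha_n=\pi/2$. This is only valid for a sofa that undergoes the full $\pi/2$ rotation, i.e., it only bounds $\sofaconstbeta(\pi/2)$ in the paper's notation. It is \emph{conjectured} but not known that $\sofaconst = \sofaconstbeta(\pi/2)$, so your argument as stated is conditional on an open problem. The paper circumvents this by (a) noting that any maximal sofa rotates by at least $\beta_0 = \sec^{-1}(\gerverconst)\approx 63.22^\circ$ (since a sofa rotating by $\beta < \beta_0$ has area $\le \sec\beta < \gerverconst$), (b) splitting the remaining angular range as $\sofaconst = \max\bigl(M(\beta_0,\alpha_5), M(\alpha_5,\pi/2)\bigr)$, and (c) handling the interval $[\beta_0,\alpha_5]$ with a separate construction --- the ``butterfly set'' $B(\beta_1,\beta_2)$, which simultaneously contains all rotated vertical strips $\rotmat{\beta}(V)$ for $\beta\in[\beta_1,\beta_2]$ --- because a sofa stopping at $\beta < \alpha_5$ is not constrained by hallway translates at angles above $\beta$. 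Without something playing the role of the butterfly sets and Proposition~\ref{prop:sofa-fg-bounds}(iii), your scheme cannot certify a bound on $\sofaconst$ itself.

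\paragraph{Gap 2: the symmetrization step is unjustified.}
You claim that, by averaging a sofa with its reflection across the angular midline, one may assume the optimizer is symmetric, thereby halving the parameter count. The reflection of a moving sofa across the line of symmetry of the corner is indeed again a moving sofa, but the \emph{intersection} of the two is the symmetric object whose existence follows from this observation, and intersecting can only decrease area; the union and the Minkowski average are not obviously moving sofas. No symmetrization lemma of the type you invoke is known (and the paper does not use one). Removing this claim does not break the rest of your scheme, but it means you cannot cut the dimension in half, and the claim as written is a gap.

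The remaining pieces --- compactifying the parameter space, the per-box upper bound via ``most permissive'' translates (which is exactly the paper's $\Gamma_{\boldsymbol{\alpha}}^{\beta_1,\beta_2}(E)$ built from the $\widehat{L}_\alpha$ sets), bisection and pruning --- match the paper closely, so once Gap~1 is repaired the proposal becomes essentially the paper's argument.
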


More importantly than the specific bound $\bestupperbound$, our approach to proving Theorem~\ref{thm:new-upperbound} involves the design of a computer-assisted proof scheme that can be used to rigorously derive even sharper upper bounds; in fact, our algorithm can produce a sequence of rigorously-certified bounds that converge to the true value $\sofaconst$ (see Theorems~\ref{thm:conv-moving-sofa} and~\ref{thm:asym-sharpness} below). An implementation of the scheme we coded in \texttt{C++} using exact rational arithmetic certifies $\bestupperbound$ as a valid upper bound after running for 480 hours on one core of a 2.3 GHz Intel Xeon E5-2630 processor. Weaker bounds that are still stronger than Hammersley's bound can be proved in much less time---for example, a bound of $2.7$ can be proved using less than one minute of processing time.

Our proof scheme is based on the observation that the moving sofa problem, which is an optimization problem in an infinite-dimensional space of shapes, can be relaxed in many ways to arrive at a family of finite-dimensional optimization problems in certain spaces of polygonal shapes. These finite-dimensional optimization problems are amenable to attack using a computer search.

Another of our results establishes new restrictions on a moving sofa shape of largest area, and specifically on the angle by which such a shape must rotate as it moves around the corner. Gerver proved \cite[Th.~1]{gerver} that the motion of a largest area moving sofa shape around the corner can be parametrized such that its angle of rotation increases monotonically and continuously from $0$ to some terminal angle $\beta$, with $\pi/3\le \beta\le \pi/2$---that is, a largest area moving sofa must undergo rotation by an angle of at least $\pi/3$ as it moves around the corner, and does not need to rotate by an angle greater than $\pi/2$. As explained in the next section, Gerver's argument actually proves a slightly stronger result with $\pi/3$ replaced by the angle $\beta_0 = \sec^{-1}(\gerverconst) \approx 63.22^\circ$. We will prove the following improved bound on the angle of rotation of a moving sofa of maximal area.

\begin{thm}[New rotation lower bound in the moving sofa problem] \label{thm:angle-bound}
Any moving sofa shape of largest area must undergo rotation by an angle of at least $\sin^{-1}(84/85) \approx 81.203^\circ$ as it moves around the corner.
\end{thm}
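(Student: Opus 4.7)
The plan is to follow Gerver's general strategy---derive an upper bound on the area of a sofa that rotates by at most $\alpha$ and then invert---but to tighten Gerver's bound by exploiting more of the motion's constraints. Working in the sofa's rest frame, the initial ($\theta = 0$) and final ($\theta = \alpha$) positions of the hallway confine the sofa $S$ to two unit-width strips meeting at angle $\pi/2 - \alpha$, whose intersection is a rhombus of area $\sec\alpha$. Imposing $\sec\alpha \ge \sofaconst \ge \gerverconst$ recovers the bound $\alpha \ge \sec^{-1}(\gerverconst) \approx 63.22^\circ$ mentioned in the paragraph preceding the theorem; any improvement must use constraints beyond just the two endpoint positions.

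To do better, I would fold in the hallway constraints at intermediate rotation angles $\theta \in [0,\alpha]$. After a canonical choice of sofa frame (one that, for instance, pins the hallway's inner corner to the origin), the outer walls at time $\theta$ form a one-parameter family of lines tangent to a unit circle, whose envelope over $\theta \in [0,\alpha]$ is a circular arc of angular extent $\alpha$. The same analysis applies to the opposing outer wall, yielding a second arc. Consequently $S$ is contained in a region $R(\alpha)$ strictly smaller than Gerver's rhombus---two circular segments are cut off. A direct computation expresses $\operatorname{area}(R(\alpha))$ in closed form as a function of $\alpha$, and solving $\operatorname{area}(R(\alpha^*)) = \gerverconst$ should produce the critical angle $\alpha^* = \sin^{-1}(84/85)$. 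The fact that $\sin\alpha^* = 84/85$ and $\cos\alpha^* = 13/85$ are rational (equivalently $\tan(\alpha^*/2) = 6/7$) strongly suggests that the critical equation is algebraic and admits exact rational certification, matching the methodology of Theorem~\ref{thm:new-upperbound}. Since the area bound is strictly increasing in $\alpha$, any sofa of area $\ge \gerverconst$ must rotate by at least $\alpha^*$.

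The main obstacle is rigorously justifying the containment $S \subseteq R(\alpha)$. This requires exploiting the translational freedom in the motion to choose a rest frame that anchors the inner corner in a controlled way; alternatively, one bounds $\operatorname{area}(R(\alpha))$ uniformly over all admissible trajectories of the inner corner in the sofa's frame. A secondary difficulty is verifying that the envelope region alone is tight enough to reach the value $\sin^{-1}(84/85)$: if not, the bound on $R(\alpha)$ must be supplemented with the constraint that the inner-corner trajectory lies outside $S$, or with a finite-dimensional relaxation in the spirit of Theorem~\ref{thm:new-upperbound} restricted to sofas with bounded rotation.
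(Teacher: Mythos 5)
Your primary route---an envelope argument producing a closed-form region $R(\alpha)$ and solving $\operatorname{area}(R(\alpha^*))=\gerverconst$ analytically---has a gap that in fact prevents it from working, and it is not what the paper does.

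The gap is in the envelope step. You imagine pinning the hallway's inner corner at the origin, so that the outer walls at time $\theta$ form a one-parameter family of lines tangent to a unit circle, whose envelope over $\theta\in[0,\alpha]$ cuts circular segments off Gerver's rhombus. But the inner corner is not pinned: in the sofa's rest frame the hallway at time $\theta$ is $L_\theta(\mathbf{x}(\theta))$ for a free continuous path $\mathbf{x}$, and the sofa shape is the intersection over all $\theta$ of these \emph{translated} copies. The two strips at the endpoints $\theta=0,\alpha$ have translation-invariant intersection area $\sec\alpha$ (strips are strips), which is why Gerver's endpoint argument works without choosing a frame. As soon as you bring in an intermediate angle $\theta\in(0,\alpha)$, the outer wall's position depends on $\mathbf{x}(\theta)$, and for each fixed $\theta$ the translational freedom lets the wall move arbitrarily far away, so the pointwise envelope is vacuous. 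Any improvement over Gerver must cope with the supremum over all rotation paths $\mathbf{x}$, which is an infinite-dimensional optimization. Your fallback sentence at the end (``a finite-dimensional relaxation in the spirit of Theorem~\ref{thm:new-upperbound} restricted to sofas with bounded rotation'') is essentially the only viable route, and it is exactly what the paper does.

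You are also off on what $\sin^{-1}(84/85)$ is. It is not the root of a critical algebraic equation and the paper explicitly states ``There is no reason to expect this bound to be sharp.'' The Pythagorean-triple form is there because the companion software \texttt{SofaBounds} works in exact rational arithmetic, so the authors needed all angles to satisfy $(\cos\gamma,\sin\gamma)\in\mathbb{Q}^2$; the angle $\alpha_7 = \sin^{-1}(84/85)$ is simply the largest such convenient angle for which the computation happened to terminate with a useful bound in the available time. The paper's actual proof writes $M(\beta_1,\beta_2)=\sup_{\beta_1\le\beta\le\beta_2}\sofaconstbeta(\beta)$, splits $[0,\alpha_7]$ as $[0,\alpha_4]\cup[\alpha_4,\alpha_5]\cup[\alpha_5,\alpha_6]\cup[\alpha_6,\alpha_7]$, bounds $M(0,\alpha_4)\le\sec\alpha_4<2$ by Gerver's observation, bounds each of the remaining pieces by $2.21$ via Proposition~\ref{prop:sofa-fg-bounds}(iii) with the ``butterfly set'' $B(\beta_1,\beta_2)$ and the computer-verified inequalities \eqref{eq:numerical-bound2}--\eqref{eq:numerical-bound4}, and then concludes $M(0,\alpha_7)\le 2.21 < \gerverconst \le \sofaconst$, forcing the maximizing sofa to rotate past $\alpha_7$.
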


There is no reason to expect this bound to be sharp; in fact, it is natural to conjecture that any largest area moving sofa shape must undergo rotation by an angle of $\pi/2$. As with the case of the bound \eqref{eq:upperbound}, our techniques make it possible in principle to produce further improvements to the rotation lower bound, albeit at a growing cost in computational resources.

The paper is arranged as follows. Section~\ref{sec:theory} below defines the family of finite-dimensional optimization problems majorizing the moving sofa problem and develops the necessary theoretical ideas that set the ground for the computer-assisted proof scheme. In Section~\ref{sec:algorithm} we build on these results and introduce the main algorithm for deriving and certifying improved bounds, then prove its correctness. Section~\ref{sec:numerical} discusses specific numerical examples illustrating the use of the algorithm, leading to a proof of Theorems~\ref{thm:new-upperbound} and~\ref{thm:angle-bound}. The Appendix describes \texttt{SofaBounds}, a software implementation we developed as a companion software application to this paper \cite{sofabounds}.

\paragraph{Acknowledgements.} Yoav Kallus was supported by an Omidyar Fellowship at the Santa Fe Institute. Dan Romik thanks Greg Kuperberg for a key suggestion that was the seed from which Proposition~\ref{prop:sofa-fg-bounds} eventually grew, and John Sullivan, Joel Hass, Jes\'us De Loera, Maria Trnkova and Jamie Haddock for helpful discussions. We also thank the anonymous referee for helpful suggestions.

\section{A family of geometric optimization problems}

\label{sec:theory}

In this section we define a family of discrete-geometric optimization problems that we will show are in a sense approximate versions of the moving sofa problem for polygonal regions. Specifically, for each member of the family, the goal of the optimization problem will be to maximize the area of the intersection of translates of a certain finite set of polygonal regions in $\R^2$. It is worth noting that such optimization problems have been considered more generally in the computational geometry literature; see, e.g., \cite{harpeled-roy, mount-silverman}.

We start with a few definitions. Set
\begin{align*}
H &= \R \times [0,1], \\
V &= [0,1] \times \R, \\
\armh &= (-\infty,1]\times[0,1],  \\
\armv &= [0,1]\times (-\infty,1], \\
L_0 &= \armh \cup \armv.
\end{align*}
For an angle $\alpha \in [0,\pi/2]$ and a vector $\mathbf{u}=(u_1,u_2)\in\R^2$, denote
\begin{align*}
L_\alpha(\mathbf{u}) &= 
\Big\{ (x,y)\in\R^2\,:\,
u_1 \le x\cos \alpha + y\sin \alpha \le u_1+1 
\\& \qquad\qquad\qquad\qquad\qquad\qquad \textrm{ and \ } -x\sin\alpha + y \cos\alpha \le u_2+1
\Big\}
\\ & \quad \cup
\Big\{ (x,y)\in\R^2\,:\,
x\cos \alpha + y\sin \alpha \le u_1+1 
\\ & \qquad\qquad\qquad\qquad\qquad\qquad
\textrm{ and } u_2\le -x\sin\alpha + y \cos\alpha \le u_2+1
\Big\},
\end{align*}
For angles $\beta_1, \beta_2$, denote
\begin{align*}
B(\beta_1,\beta_2) &= 
\Big\{ (x,y)\in\R^2\,:\,
    0 \le x\cos \beta_1 + y\sin \beta_1
\\& \qquad\qquad\qquad\qquad\qquad\qquad \textrm{ and \ } x\cos \beta_2 + y\sin \beta_2 \le 1
\Big\}
\\ & \quad \cup
\Big\{ (x,y)\in\R^2\,:\,
    x\cos \beta_1 + y\sin \beta_1 \le 1
\\& \qquad\qquad\qquad\qquad\qquad\qquad \textrm{ and \ } 0 \le x\cos \beta_2 + y\sin \beta_2
\Big\},
\end{align*}
Geometrically, $L_\alpha(\mathbf{u})$ is the $L$-shaped hallway $L_0$ translated by the vector~$\mathbf{u}$ and then rotated around the origin by an angle of $\alpha$; and $B(\beta_1,\beta_2)$, which we nickname a ``butterfly set,'' is a set that contains a rotation of the vertical strip $V$ around the origin by an angle $\beta$ for all $\beta\in[\beta_1,\beta_2]$. See Fig.~\ref{fig:geometric-sets}.

\begin{figure}
\begin{center}
\begin{tabular}{ccc}
 \scalebox{0.5}{\includegraphics{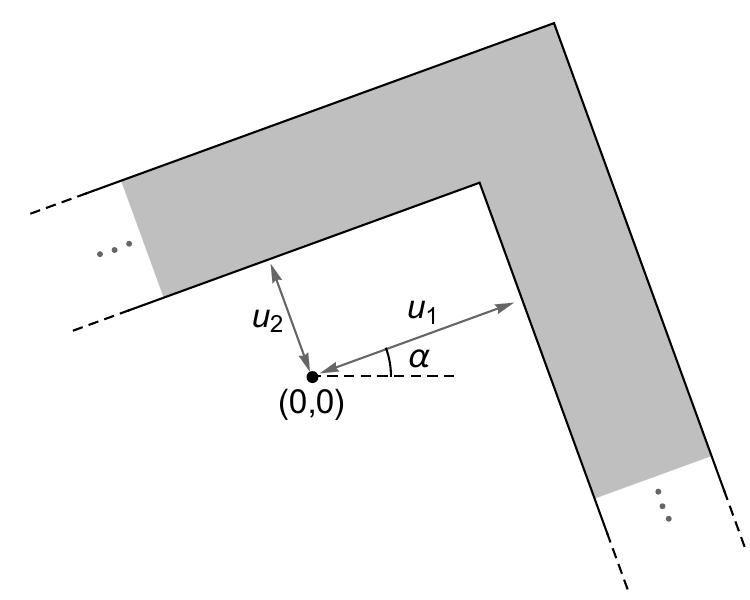}} 
 & \scalebox{0.5}{\includegraphics{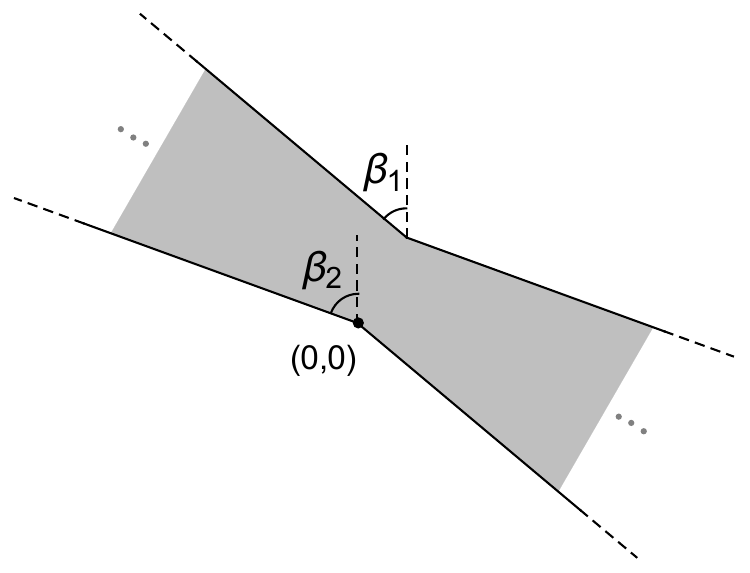}} 
\\
(a) & (b)
\end{tabular}
\caption{
a) The rotated and translated $L$-shaped corridor $L_\alpha(u_1,u_2)$. (b) The ``butterfly set'' $B(\beta_1,\beta_2)$.
}
\label{fig:geometric-sets}
\end{center}
\end{figure}

Next, let $\lambda$ denote the area measure on $\R^2$ and let $\alcc(X)$ denote the maximal area of any connected component of $X\subset\R^2$.
Given a vector $\boldsymbol{\alpha} = (\alpha_1,\ldots,\alpha_k)$ of angles $0< \alpha_1<\ldots < \alpha_k < \pi/2$ and two additional angles $\beta_1,\beta_2 \in (\alpha_k,\pi/2]$ with $\beta_1\le\beta_2$, define
\begin{align}
g_{\boldsymbol{\alpha}}^{\beta_1,\beta_2}(\mathbf{u}_1,\ldots,\mathbf{u}_k) &= \alcc\left(
H \cap \bigcap_{j=1}^k L_{\alpha_j}(\mathbf{u}_j) \cap B(\beta_1,\beta_2) 
\right) \quad (\mathbf{u}_1,\ldots,\mathbf{u}_k \in \R^2), \label{eq:def-little-g} \\[5pt]
\label{eq:def-G}
G_{\boldsymbol{\alpha}}^{\beta_1,\beta_2}
&= \sup \left\{
g_{\boldsymbol{\alpha}}^{\beta_1,\beta_2}(\mathbf{u}_1,\ldots,\mathbf{u}_k) 
\,:\,
\mathbf{u}_1,\ldots,\mathbf{u}_k \in \R^{2}
\right\}. 
\end{align}
An important special case is $G_{\boldsymbol{\alpha}}^{\pi/2,\pi/2}$, which we denote simply as $G_{\boldsymbol{\alpha}}$. Note that $B(\beta_1,\pi/2)\cap H = H$, so in that case the inclusion of $B(\beta_1,\beta_2)$ in \eqref{eq:def-little-g} is superfluous.

The problem of computing $G_{\boldsymbol{\alpha}}^{\beta_1,\beta_2}$ is an optimization problem in $\R^{2k}$. The following lemma shows that the optimization can be performed on a compact subset of $\R^{2k}$ instead.

\begin{lem}\label{lem:supmax}
There exists a box $\Omega_{\boldsymbol{\alpha}}^{\beta_1,\beta_2}=[a_1,b_1]\times\ldots \times [a_{2k},b_{2k}] \subset \R^{2k}$, with the values of $a_i,b_i$ being explicitly computable functions of $\boldsymbol{\alpha}, \beta_1, \beta_2$, such that
\begin{equation} \label{eq:supmax}
G_{\boldsymbol{\alpha}}^{\beta_1,\beta_2} = 
 \max \left\{
g_{\boldsymbol{\alpha}}^{\beta_1,\beta_2}(\mathbf{u}) 
\,:\,
\mathbf{u} \in \Omega_{\boldsymbol{\alpha}}^{\beta_1,\beta_2}
\right\}. 
\end{equation}
\end{lem}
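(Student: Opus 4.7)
The plan is to show that outside a certain explicit box $\Omega_{\boldsymbol{\alpha}}^{\beta_1,\beta_2}$ the function $g_{\boldsymbol{\alpha}}^{\beta_1,\beta_2}$ either vanishes or takes values already attained inside the box, and then to upgrade the sup over the compact box to a maximum via upper semicontinuity.

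First I would record the elementary observation that for any $\alpha\in(0,\pi/2)$ the set $L_\alpha(\mathbf{u})\cap H$ is a bounded polygonal region, because each arm of $L_\alpha(\mathbf{u})$ is a half-strip of width one whose infinite direction is neither horizontal nor vertical, so its intersection with the horizontal unit strip $H$ is a bounded parallelogram of area at most $1/\sin\alpha$ or $1/\cos\alpha$. Consequently $L_{\alpha_j}(\mathbf{u}_j)\cap H$ lies in a disk of explicit radius $D(\alpha_j)$ around the point $p_j(\mathbf{u}_j)$ defined as the image of the L-corner $(u_{j,1}+1,u_{j,2}+1)$ under rotation by $\alpha_j$, and the map $\mathbf{u}_j\mapsto p_j(\mathbf{u}_j)$ is an affine isometry of $\R^2$.

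Second, since both $H$ and $B(\beta_1,\beta_2)$ are invariant under horizontal translation, the function $g_{\boldsymbol{\alpha}}^{\beta_1,\beta_2}$ is invariant under the one-parameter reparametrization $\mathbf{u}_j\mapsto(u_{j,1}+a\cos\alpha_j,\,u_{j,2}-a\sin\alpha_j)$ for $a\in\R$, which corresponds to shifting every rotated L-corridor by $(a,0)$ in Cartesian coordinates. I use this freedom to require that the $x$-coordinate of $p_1(\mathbf{u}_1)$ equal $0$. With this normalization in place, any parameter vector $\mathbf{u}$ with $g_{\boldsymbol{\alpha}}^{\beta_1,\beta_2}(\mathbf{u})>0$ has each $L_{\alpha_j}(\mathbf{u}_j)\cap H$ nonempty and overlapping $L_{\alpha_1}(\mathbf{u}_1)\cap H$, yielding the bounds $|p_j(\mathbf{u}_j)-p_1(\mathbf{u}_1)|\le D(\alpha_1)+D(\alpha_j)$ together with a bound on the $y$-coordinate of $p_1(\mathbf{u}_1)$. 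Pulling these back through the affine maps $\mathbf{u}_j\mapsto p_j$ produces the explicit box $\Omega_{\boldsymbol{\alpha}}^{\beta_1,\beta_2}\subset\R^{2k}$, and all of the bounds $a_i,b_i$ can be written as elementary trigonometric expressions in $\boldsymbol{\alpha},\beta_1,\beta_2$.

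The main remaining issue---the step I expect to be the most delicate---is verifying that $g_{\boldsymbol{\alpha}}^{\beta_1,\beta_2}$ is upper semicontinuous on the compact box so that the supremum is attained. The subtlety is that the largest-connected-component functional $\alcc$ can in general jump down under limits when a single component splits into several; however, in the piecewise-linear setting here the intersection $H\cap\bigcap_j L_{\alpha_j}(\mathbf{u}_j)\cap B(\beta_1,\beta_2)$ is a polygonal set depending continuously on $\mathbf{u}$ in the Hausdorff metric, and a direct analysis shows that at any limit point where a component splits, the two resulting limit components meet only on a set of measure zero, so their areas are additive and upper semicontinuity of $g_{\boldsymbol{\alpha}}^{\beta_1,\beta_2}$ follows. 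Combined with the compactness of $\Omega_{\boldsymbol{\alpha}}^{\beta_1,\beta_2}$ this yields equation \eqref{eq:supmax}.
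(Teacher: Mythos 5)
Your proposal has two genuine gaps, both of which the paper handles with a more careful case analysis.

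The first and more serious gap is the assertion that $L_{\alpha_j}(\mathbf{u}_j)\cap H$ lies in a disk of fixed radius around the rotated L-corner $p_j(\mathbf{u}_j)$. This is false. Write $p_j=(p_{j,1},p_{j,2})$. One can check that $L_{\alpha_j}(\mathbf{u}_j)\subset\{(X,Y):Y\le p_{j,2}\}$, but when $p_{j,2}$ is large (corner far above $H$), the two arms of the rotated $L$ descend in directions $(-\cos\alpha_j,-\sin\alpha_j)$ and $(\sin\alpha_j,-\cos\alpha_j)$ and cross the strip $H$ at horizontal positions roughly $p_{j,1}-(p_{j,2}-1)\cot\alpha_j$ and $p_{j,1}+(p_{j,2}-1)\tan\alpha_j$, which separate without bound as $p_{j,2}\to\infty$. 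So $L_{\alpha_j}(\mathbf{u}_j)\cap H$ becomes two disconnected parallelograms that move arbitrarily far from $p_j$, and the ``disk of radius $D(\alpha_j)$'' simply does not exist. Consequently the bound $|p_j(\mathbf{u}_j)-p_1(\mathbf{u}_1)|\le D(\alpha_1)+D(\alpha_j)$ is unjustified and the explicit box you propose is not known to contain a maximizer. The paper closes exactly this hole by observing that once the two arm-parallelograms separate, they are each a \emph{fixed-shape} rhombus, so one can translate $\mathbf{u}_j$ back to a bounded value without changing $\alcc$ of the intersection; without some version of that matching argument, the parameters are not a priori bounded.

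The second gap is the claim that $B(\beta_1,\beta_2)$ is invariant under horizontal translation. That is false unless $\cos\beta_1=\cos\beta_2=0$, i.e., $\beta_1=\beta_2=\pi/2$. The correct statement is that $B(\beta_1,\pi/2)\cap H=H$, so the invariance you invoke (and hence the normalization of $p_1$ to $x$-coordinate $0$) is available only when $\beta_2=\pi/2$. When $\beta_2<\pi/2$ the set $H\cap B(\beta_1,\beta_2)$ is itself bounded, which is in fact the easier case since it immediately confines the region of interest; but your argument as written applies the normalization unconditionally and therefore does not handle $\beta_2<\pi/2$ correctly. The paper splits into these two cases precisely because different arguments are needed. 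Your remarks on upper semicontinuity are in the right spirit (and the paper itself only asserts it), but the two issues above mean the construction of $\Omega_{\boldsymbol{\alpha}}^{\beta_1,\beta_2}$ is not established.
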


\begin{proof}
We will show that any value of $g_{\boldsymbol{\alpha}}^{\beta_1,\beta_2}(\mathbf{u})$ attained for some $\mathbf{u}\in\R^2$ is matched by a value attained inside a sufficiently large box. This will establish that $g_{\boldsymbol{\alpha}}^{\beta_1,\beta_2}(\mathbf{u})$ is bounded from above; the fact that it attains its maximum follows immediately, since $g_{\boldsymbol{\alpha}}^{\beta_1,\beta_2}(\mathbf{u})$ is easily seen to be an upper semicontinuous function.

   Start by observing that for every interval $[x_1,x_2]$ and $0<\alpha<\pi/2$, there are intervals $I$ and $J$ such
    that if $(u,v)\in \R^2\setminus I\times J$, the set $\big([x_1,x_2]\times[0,1]\big)\cap L_\alpha(u,v)$ is either empty or is identical to $\big([x_1,x_2]\times[0,1]\big)\cap L_\alpha(u',v')$
    for some $(u',v')\in I\times J$. Indeed, this is valid with the choices \begin{align*}
    I&=[x_1 \cos\alpha-1,x_2 \cos\alpha+\sin\alpha], \\
    J&=[-x_2 \sin\alpha-1,-x_1 \sin\alpha+\cos\alpha].
\end{align*}

    We now divide the analysis into two cases. First, if $\beta_2<\pi/2$, then $H\cap B(\beta_1,\beta_2)\subseteq[-\tan \beta_2,\sec \beta_2]\times[0,1]$.
    Therefore, if we define $\Omega_{\boldsymbol{\alpha}}^{\beta_1,\beta_2}=I_1\times J_1 \times I_2 \times J_2 \times \cdots\times I_k \times J_k$, where for each $1\le i\le k$, $I_i$ and $J_i$ are intervals $I,J$ as described in the above observation as applied to the angle $\alpha=\alpha_i$, then $g_{\boldsymbol{\alpha}}^{\beta_1,\beta_2}(u_1,\ldots,u_{2k})$ is guaranteed to
    attain its maximum on $\Omega_{\boldsymbol{\alpha}}^{\beta_1,\beta_2}$, since any value attained outside $\Omega_{\boldsymbol{\alpha}}^{\beta_1,\beta_2}$ is either zero or matched by a value attained inside~$\Omega_{\boldsymbol{\alpha}}^{\beta_1,\beta_2}$.
    
    Second, if $\beta_2=\pi/2$, then $H\cap B(\beta_1,\beta_2) = H$.
    The optimization objective function $g_\mathbf{\alpha}^{\beta_1,\beta_2}(\mathbf{u}_1,\ldots,\mathbf{u}_k)$ is invariant to translating all the rotated $L$-shaped hallways horizontally by the same amount (which corresponds to translating each variable $\mathbf{u}_j$ in the direction of the vector $(\cos\alpha_j, -\sin\alpha_j)$).
    Therefore, fixing an arbitrary $1\le j\le k$, any value of $g_\mathbf{\alpha}^{\beta_1,\beta_2}(\mathbf{u}_1,\ldots,\mathbf{u}_k)$ attained on $\mathbb{R}^{2k}$ is also attained at some point satisfying $\mathbf{u}_j=(0,u_{j,2})$.
    Furthermore, we can constrain $u_{j,2}$ as follows: first, when $u_{j,2}<-\tan\alpha_j-1$, then $L_{\alpha_j}(0,u_{j,2})\cap H$ is empty.
    Second, when $u_{j,2}>\sec\alpha_j$, then $L_{\alpha_j}(0,u_{j,2})\cap H$ is the union of two disconnected components, one of which is a translation
    of $\rotmat{\alpha_j}(H)\cap H$ and the other is a translation of $\rotmat{\alpha_j}(V)\cap H$.
    Since the largest connected component of $H \cap \bigcap_{j=1}^k L_{\alpha_j}(\mathbf{u}_j)$ is contained in one of these two rhombuses, and
    since the translation of the rhombus does not affect the maximum attained area, we see that any objective value attained with $\mathbf{u}_j=(0,u_{j,2})$,
    where $u_{j,2}>\sec\alpha_j$, can also be attained with $\mathbf{u}_j=(0,\sec\alpha_j)$. So, we may restrict $\mathbf{u}_j\in I_j\times J_j$, where
    $I_j = \{0\}$ and $J_j=[-\tan\alpha_j-1,\sec\alpha_j]$.
    Finally, when $\mathbf{u}_j\in I_j\times J_j$, we have $H\cap L_{\alpha_j}(\mathbf{u})\subseteq[\csc\alpha_j,\sec\alpha_j]\times[0,1]$, so we can repeat
    a procedure similar to the one used in the case $\beta=\pi/2$ above to construct intervals $I_i$ and $J_i$ for all $i\neq j$ to ensure that \eqref{eq:supmax} is satisfied.
\end{proof}

We now wish to show that the function $G_{\boldsymbol{\alpha}}^{\beta_1,\beta_2}$ relates to the problem of finding upper bounds in the moving sofa problem. The idea is as follows. Consider a sofa shape $S$ that moves around the corner while rotating continuously and monotonically  (in a clockwise direction, in our coordinate system) between the angles $0$ and $\beta \in [0,\pi/2]$. As we mentioned in the Introduction, a key fact proved by Gerver \cite[Th.~1]{gerver} is that in the moving sofa problem it is enough to consider shapes being moved in this fashion. By changing our frame of reference to one in which the shape stays fixed and the $L$-shaped hallway $L_0$ is dragged around the shape while being rotated, we see (as discussed in \cite{gerver, romik}) that $S$ must be contained in the intersection

\begin{equation} \label{eq:sx-intersections}
S_{\mathbf{x}} = \armh \cap \bigcap_{0\le t\le \beta} L_t(\mathbf{x}(t))
\cap \Big(\mathbf{x}(\beta)+\rotmat{\beta}(\armv) \Big),
\end{equation}
where $\mathbf{x}:[0,\beta]\to\R^2$ is a continuous path satisfying $\mathbf{x}(0)=(0,0)$ that encodes the path by which the hallway is dragged as it is being rotated, and where $\rotmat{\beta}(\armv)$ denotes $\armv$ rotated by an angle of $\beta$ around $(0,0)$ (more generally, here and below we use the notation $\rotmat{\beta}(\cdot)$ for a rotation operator by an angle of $\beta$ around the origin). We refer to such a path as a \textbf{rotation path}, or a \textbf{$\beta$-rotation path} when we wish to emphasize the dependence on $\beta$. Thus, 
the area of $S$ is at most $\alcc(S_\mathbf{x})$, the maximal area of a connected component of $S_\mathbf{x}$, and conversely, a maximal area connected component of $S_\mathbf{x}$ is a valid moving sofa shape of area $\alcc(S_\mathbf{x})$.
Gerver's result therefore implies that
\begin{equation} \label{eq:sofaconst-characterization}
\sofaconst = \sup \Big\{ \alcc(S_\mathbf{x}) \,:\,
\mathbf{x}\textrm{ is a $\beta$-rotation path for some }\beta\in[0,\pi/2]
\Big\}.
\end{equation}
It is also convenient to define
$$
\sofaconstbeta(\beta) = \sup \Big\{ \alcc(S_\mathbf{x}) \,:\,
\mathbf{x}\textrm{ is a $\beta$-rotation path}
\Big\}
\qquad (0 \le \beta \le \pi/2).
$$
so that we have the relation
\begin{equation} \label{eq:sofaconst-beta}
\sofaconst = \sup_{0 < \beta \le \pi/2} \sofaconstbeta(\beta). 
\end{equation}
Moreover, as Gerver pointed out in his paper, $\sofaconstbeta(\beta)$ is bounded from above by the area of the intersection of the horizontal strip $H$ and the rotation of the vertical strip $V$ by an angle $\beta$, which is equal to $\sec(\beta)$. Since $\sofaconst \ge \gerverconst$, and $\sec(\beta)\ge \gerverconst$ if and only if $\beta\in[\beta_0,\pi/2]$, where we define $\beta_0 = \sec^{-1}(\gerverconst) \approx 63.22^\circ$, we see that in fact
\begin{equation} \label{eq:sofaconst-beta0}
\sofaconst = \sup_{\beta_0 \le \beta \le \pi/2} \sofaconstbeta(\beta), 
\end{equation}
and furthermore, $\sofaconst > \sofaconstbeta(\beta)$ for any $0<\beta<\beta_0$, i.e., any moving sofa of maximal area has to rotate by an angle of at least $\beta_0$.
(Gerver applied this argument to claim a slightly weaker version of this result in which the value of $\beta_0$ is taken as $\pi/3 = \sec^{-1}(2)$; see \cite[p.~271]{gerver}).
Note that it has not been proved, but seems natural to conjecture, that $\sofaconst = \sofaconstbeta(\pi/2)$---an assertion that would follow from Gerver's conjecture that the shape he discovered is the moving sofa shape of largest area, but may well be true even if Gerver's conjecture is false.

The relationship between our family of finite-dimensional optimization problems and the moving sofa problem is made apparent by the following result.

\begin{prop}
\label{prop:sofa-fg-bounds}
(i) For any $\boldsymbol{\alpha}=(\alpha_1,\ldots,\alpha_k)$ and $\beta$ with $0<\alpha_1<\ldots<\alpha_k \le \beta\le\pi/2$,
we have
\begin{equation} 
\sofaconstbeta(\beta) \le G_{\boldsymbol{\alpha}}. \label{eq:sofa-fg-bound1}
\end{equation}

\noindent (ii) For any $\boldsymbol{\alpha}=(\alpha_1,\ldots,\alpha_k)$ with $0<\alpha_1<\ldots<\alpha_k \le \beta_0$, we have
\begin{equation}
\sofaconst \le G_{\boldsymbol{\alpha}}. \label{eq:sofa-fg-bound2}
\end{equation} 

\noindent (iii) 
For any $\boldsymbol{\alpha}=(\alpha_1,\ldots,\alpha_k)$ and $\beta_1,\beta_2$ with $0<\alpha_1<\ldots<\alpha_k \le \beta_1 < \beta_2 \le \pi/2$, we have
\begin{equation}
\sup_{\beta\in[\beta_1,\beta_2]} \sofaconstbeta(\beta) \le G_{\boldsymbol{\alpha}}^{\beta_1,\beta_2}.
\label{eq:sofa-fg-bound3}
\end{equation} 
\end{prop}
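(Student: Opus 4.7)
The plan is to exploit the fact that the intersection defining $S_{\mathbf{x}}$ in \eqref{eq:sx-intersections} only grows when hallway constraints are discarded; retaining only the ones indexed by the finitely many angles $\alpha_j$ will majorize $\alcc(S_{\mathbf{x}})$ by a value of $g_{\boldsymbol{\alpha}}^{\beta_1,\beta_2}$, possibly after a translation adjustment.

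For part (i), fix a $\beta$-rotation path $\mathbf{x}$ and observe
$$S_{\mathbf{x}} \subseteq \armh \cap \bigcap_{j=1}^k L_{\alpha_j}(\mathbf{x}(\alpha_j)) \subseteq H \cap \bigcap_{j=1}^k L_{\alpha_j}(\mathbf{x}(\alpha_j)),$$
using $\armh\subseteq H$ and discarding the final-arm factor. Any connected component of $S_{\mathbf{x}}$ then lies inside a single connected component of the right-hand side, so $\alcc(S_{\mathbf{x}})\le g_{\boldsymbol{\alpha}}(\mathbf{x}(\alpha_1),\ldots,\mathbf{x}(\alpha_k))\le G_{\boldsymbol{\alpha}}$; taking the supremum over $\mathbf{x}$ gives \eqref{eq:sofa-fg-bound1}. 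Part (ii) then follows immediately by combining (i) with \eqref{eq:sofaconst-beta0}, since the hypothesis $\alpha_k\le\beta_0$ makes (i) applicable to every $\beta\in[\beta_0,\pi/2]$.

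For part (iii) the same discarding is used, but now the butterfly factor $B(\beta_1,\beta_2)$ must also be accommodated. The sofa lies in the rotated strip $\mathbf{x}(\beta)+\rotmat{\beta}(V)$, which is anchored at $\mathbf{x}(\beta)$ rather than at the origin where the butterfly is built, so I plan to compensate by a horizontal translation. Writing $\mathbf{x}(\beta)=(p,q)$ and setting $a=p+q\tan\beta$ (well-defined for $\beta<\pi/2$), the simultaneous shift $(x,y)\mapsto(x-a,y)$ applied to the sofa and to all the hallways slides the containing strip exactly onto $\rotmat{\beta}(V)=\{0\le x\cos\beta+y\sin\beta\le 1\}$, preserves the containment in the horizontal strip $H$, and converts each $L_{\alpha_j}(\mathbf{x}(\alpha_j))$ into $L_{\alpha_j}(\mathbf{u}_j)$ with $\mathbf{u}_j=\mathbf{x}(\alpha_j)-(a\cos\alpha_j,-a\sin\alpha_j)$. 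The borderline case $\beta=\pi/2$ needs no shift, since $B(\beta_1,\pi/2)\cap H=H$, as already noted in the text.

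The step I expect to be the main obstacle is the geometric lemma underlying the butterfly's informal description, namely that $\rotmat{\beta}(V)\subseteq B(\beta_1,\beta_2)$ for every $\beta\in[\beta_1,\beta_2]$. I would argue by contradiction using the sinusoid $f(\gamma)=x\cos\gamma+y\sin\gamma$: a hypothetical point $(x,y)\in\rotmat{\beta}(V)\setminus B(\beta_1,\beta_2)$ must satisfy either $f(\beta_1),f(\beta_2)<0$ or $f(\beta_1),f(\beta_2)>1$, while $f(\beta)\in[0,1]$ at the interior point $\beta$. In the first case, $[\beta_1,\beta_2]$ brackets an entire positive arc of $f$, which has length $\pi$; in the second, $\beta_1$ and $\beta_2$ lie in distinct super-one arcs of $f$, which are separated by gaps of length greater than $\pi$. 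Either way $\beta_2-\beta_1>\pi$, contradicting $\beta_2-\beta_1\le\pi/2$.
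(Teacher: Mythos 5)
Your proposal is correct and follows essentially the same route as the paper: discard the constraints at the continuum of angles other than $\alpha_1,\ldots,\alpha_k$, replace $\armh$ by $H$, and for part (iii) apply a horizontal translation (your $a=p+q\tan\beta$ is exactly the coefficient of $(1,0)$ in the paper's decomposition $\mathbf{x}_{k+1}=a(1,0)+b(-\sin\beta,\cos\beta)$) followed by the containment $\rotmat{\beta}(V)\subseteq B(\beta_1,\beta_2)$. The only difference is that you give an explicit sinusoid argument for that containment, which the paper asserts without proof; your case analysis ($f(\beta_1),f(\beta_2)<0$ forcing a full length-$\pi$ positive arc inside $[\beta_1,\beta_2]$, or $f(\beta_1),f(\beta_2)>1$ forcing a gap of length $>\pi$) is a correct and welcome supplement.
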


\begin{proof} 
Start by noting that, under the assumption that  $0 < \alpha_1 <\ldots < \alpha_k \le \beta \le \pi/2$, if $\mathbf{x}$ is a $\beta$-rotation path then the values $\mathbf{x}(\alpha_1),\ldots,\mathbf{x}(\alpha_k)$ may potentially range over an arbitrary $k$-tuple of vectors in $\R^2$. It then follows that
\begin{align*}
\sofaconstbeta(\beta) &= \sup \Big\{ \alcc(S_\mathbf{x}) \,:\,
\mathbf{x}\textrm{ is a $\beta$-rotation path}
\Big\}
\nonumber \\ &=
\sup \left\{ \alcc\left(\armh \cap \bigcap_{0\le t\le \beta} L_t(\mathbf{x}(t))
\cap \Big(\mathbf{x}(\beta)+\rotmat{\beta}(\armv) \Big)\right) \,:\,
\right. \nonumber \\ & \qquad \qquad \qquad \qquad \qquad \qquad \qquad \qquad \qquad \quad \ \ \left. \mathbf{x}\textrm{ is a $\beta$-rotation path}
\vphantom{\alcc\left(\armh \cap \bigcap_{0\le t\le \beta} L_t(\mathbf{x}(t))
\cap \Big(\mathbf{x}(\beta)+\rotmat{\beta}(\armv) \Big)\right)}
\right\}
\nonumber \\ &\le
\sup \left\{ \alcc\left(H \cap \bigcap_{j=1}^k L_{\alpha_j}(\mathbf{x}(\alpha_j))
\right) \,:\, \mathbf{x}\textrm{ is a $\beta$-rotation path}
\right\}
\nonumber \\ &=
\sup \left\{ \alcc\left(H \cap \bigcap_{j=1}^k L_{\alpha_j}(\mathbf{x}_j)
\right) \,:\, \mathbf{x}_1,\ldots,\mathbf{x}_k\in\R^2 
\right\}
= G_{\boldsymbol{\alpha}}.
\end{align*}
This proves claim (i) of the Proposition. 
If one further assumes that $\alpha_k\le \beta_0$, \eqref{eq:sofa-fg-bound2} also follows immediately using \eqref{eq:sofaconst-beta0}, proving claim (ii).

The proof of claim (iii) follows a variant of the same argument used above; first, note that we may assume that $\beta_2<\pi/2$, since the case $\beta_2=\pi/2$ already follows from part (i) of the Proposition. Next, observe that
\begin{align*}
\sofaconstbeta(\beta) &= 
\sup \left\{ \alcc\left(\armh \cap \bigcap_{0\le t\le \beta} L_t(\mathbf{x}(t))
\cap \Big(\mathbf{x}(\beta)+\rotmat{\beta}(\armv) \Big)\right) \,:\,
\right. \nonumber \\ & \qquad \qquad \qquad \qquad \qquad \qquad \qquad \qquad \qquad \quad \ \ \left. \mathbf{x}\textrm{ is a $\beta$-rotation path}
\vphantom{\alcc\left(\armh \cap \bigcap_{0\le t\le \beta} L_t(\mathbf{x}(t))
\cap \Big(\mathbf{x}(\beta)+\rotmat{\beta}(\armv) \Big)\right)}
\right\}
\\
& \le
\sup_{\mathbf{x}_1,\ldots,\mathbf{x}_{k+1}\in\R^2} \left[ \alcc\left(H \cap \bigcap_{j=1}^k L_{\alpha_j}(\mathbf{x}_j)
\cap \Big(\mathbf{x}_{k+1}+\rotmat{\beta}(V) \Big)\right)  \right].
\\
& =
\sup_{\mathbf{y}_1,\ldots,\mathbf{y}_{k}\in\R^2} \left[ \alcc\left(H \cap \bigcap_{j=1}^k L_{\alpha_j}(\mathbf{y}_j)
\cap \rotmat{\beta}(V) \right)  \right],
\\\end{align*}
where the last equality follows by expressing $\mathbf{x}_{k+1}$ in the form $\mathbf{x}_{k+1}=a (1,0)+b (-\sin \beta,\cos \beta)$,
making the substitution $\mathbf{x}_j = \mathbf{y}_j+a(1,0)$ ($1\le j\le k$), and using the facts that $H+a(1,0)=H$ and $\rotmat{\beta}(V)+b(-\sin \beta,\cos\beta) = \rotmat{\beta}(V)$. Finally, as noted after the definition of $B(\beta_1,\beta_2)$, this set has the property that if
$\beta \in [\beta_1,\beta_2]$ then
$\rotmat{\beta}(V) \subset B(\beta_1,\beta_2)$. We therefore get for such $\beta$ that
\begin{align*}
\sofaconstbeta(\beta) & \le
\sup_{\mathbf{y}_1,\ldots,\mathbf{y}_{k}\in\R^2} \left[ \alcc\left(H \cap \bigcap_{j=1}^k L_{\alpha_j}(\mathbf{y}_j)
\cap B(\beta_1,\beta_2) \right)  \right] = G_{\boldsymbol{\alpha}}^{\beta_1,\beta_2},
\end{align*}
which finishes the proof.
\end{proof}

\paragraph{Example.} In the case of a vector $\boldsymbol{\alpha}=(\alpha)$ with a single angle $0<\alpha<\pi/2$, a simple calculation, which we omit, shows that 
\begin{equation}
\label{eq:g-alpha-explicit}
G_{(\alpha)} = \sec\alpha+\csc\alpha.
\end{equation}
Taking $\alpha=\pi/4$ and using Proposition~\ref{prop:sofa-fg-bounds}(ii), we get the result that $\sofaconst\le 2\sqrt{2}$, which is precisely Hammersley's upper bound for $\sofaconst$ mentioned in the introduction (indeed, this application of the Proposition is essentially Hammersley's proof rewritten in our notation). 

\bigskip
We conclude this section with a result that makes precise the notion that the optimization problems involved in the definition of $G_{\boldsymbol{\alpha}}^{\beta_1,\beta_2}$ are finite-dimensional approximations to the (infinite-dimensional) optimization problem that is the moving sofa problem.

\begin{thm}[Convergence to the moving sofa problem]
\label{thm:conv-moving-sofa}
    Let 
\begin{align*}
\boldsymbol{\alpha}(n,k) &= (\tfrac1n\tfrac\pi2,\tfrac2n\tfrac\pi2,\tfrac3n\tfrac\pi2,\ldots,\tfrac{n-k-1}{n}\tfrac\pi2)\text, \\ 
\gamma_1(n,k)&=\tfrac{n-k}{n}\tfrac\pi2, \\ 
\gamma_2(n,k)&=\tfrac{n-k+1}{n}\tfrac\pi2,
\end{align*}
    and let 
\begin{equation} \label{eq:def-wn} 
    W_n = \max_{k=1,\ldots,\lceil n/3\rceil} G_{\boldsymbol{\alpha}(n,k)}^{\gamma_1(n,k),\gamma_2(n,k)}\text.
\end{equation}
    Then $\lim_{n\to\infty} W_n = \sofaconst$.
\end{thm}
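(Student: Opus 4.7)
The plan is to prove the two inequalities $\sofaconst \le \liminf_n W_n$ and $\limsup_n W_n \le \sofaconst$ separately.

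The easy direction is immediate from Proposition~\ref{prop:sofa-fg-bounds}(iii). For each $k \in \{1, \ldots, \lceil n/3\rceil\}$ the proposition gives $\sup_{\beta \in [\gamma_1(n,k), \gamma_2(n,k)]}\sofaconstbeta(\beta) \le G_{\boldsymbol{\alpha}(n,k)}^{\gamma_1(n,k), \gamma_2(n,k)}$; taking the maximum over $k$ and noting that $\bigcup_k [\gamma_1(n,k), \gamma_2(n,k)] = [\tfrac{n - \lceil n/3 \rceil}{n}\tfrac{\pi}{2}, \tfrac{\pi}{2}]$, whose left endpoint tends to $\pi/3$, we obtain $\sup_{\beta \in [\beta_0, \pi/2]} \sofaconstbeta(\beta) \le W_n$ for all large $n$ (since $\gerverconst > 2 = \sec(\pi/3)$ forces $\beta_0 > \pi/3$). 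The characterization~\eqref{eq:sofaconst-beta0} then yields $W_n \ge \sofaconst$.

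The hard direction requires producing, from the discrete optimizer, a genuine moving sofa with nearly the same area. For each $n$ let $k_n$ achieve the maximum in~\eqref{eq:def-wn}, let $\mathbf{u}^{(n)} = (\mathbf{u}^{(n)}_1, \ldots, \mathbf{u}^{(n)}_{m_n})$ (with $m_n = n - k_n - 1$) be an optimizer realizing $g_{\boldsymbol{\alpha}(n,k_n)}^{\gamma_1(n,k_n), \gamma_2(n,k_n)}(\mathbf{u}^{(n)}) = W_n$ (Lemma~\ref{lem:supmax}), and let $C_n$ denote the corresponding maximum-area connected component. Pick $\beta_n \in [\gamma_1(n,k_n), \gamma_2(n,k_n)]$ and let $\mathbf{x}_n \colon [0, \beta_n] \to \R^2$ be the piecewise linear $\beta_n$-rotation path through $\mathbf{x}_n(0) = \mathbf{0}$ and $\mathbf{x}_n(\alpha_j(n, k_n)) = \mathbf{u}^{(n)}_j$, suitably extended to the final subinterval. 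Since $\lambda(S_{\mathbf{x}_n}) \le \sofaconstbeta(\beta_n) \le \sofaconst$, it suffices to establish $\lambda(C_n) - \lambda(S_{\mathbf{x}_n}) \to 0$. This difference decomposes into three contributions: (i) the ``butterfly wings'' $B(\gamma_1(n, k_n), \gamma_2(n, k_n)) \setminus \rotmat{\beta_n}(V)$ intersected with $H$, of area $O(1/n)$ as the butterfly's angular width $\pi/(2n)$ shrinks; (ii) the extra continuous constraints $L_t(\mathbf{x}_n(t))$ for $t$ between successive discrete angles, removing in aggregate at most $\sum_j (\alpha_{j+1}-\alpha_j) O(\|\mathbf{u}^{(n)}_{j+1} - \mathbf{u}^{(n)}_j\|)$; and (iii) the replacement of $H$ by $\armh$ and of $\rotmat{\beta_n}(V)$ by the translated final arm $\mathbf{x}_n(\beta_n) + \rotmat{\beta_n}(\armv)$, handled by choosing $\mathbf{x}_n(\beta_n)$ so that $C_n$ lies in the final arm, and by applying a horizontal translation to the entire configuration (analogous to the one in the second case of the proof of Lemma~\ref{lem:supmax}) if $C_n$ spills beyond $\armh$.

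The principal obstacle is contribution (ii), where uniform control on the consecutive jumps $\|\mathbf{u}^{(n)}_{j+1} - \mathbf{u}^{(n)}_j\|$ must be established. I expect this to follow from a compactness argument: pass to a subsequence along which $k_n/n \to \rho \in [0, 1/3]$, so that $\gamma_i(n, k_n) \to \beta_\infty := (1-\rho)\tfrac{\pi}{2} \in [\pi/3, \pi/2]$; translate the configurations so that their coordinates lie in a uniformly bounded box; then apply Arzel\`a--Ascoli to the interpolated paths $\mathbf{x}_n$, with equicontinuity following from the geometric observation that a pair of successive hallway positions with $\|\mathbf{u}^{(n)}_{j+1} - \mathbf{u}^{(n)}_j\|$ much larger than the mesh width $1/n$ would force $L_{\alpha_j(n,k_n)}(\mathbf{u}^{(n)}_j) \cap L_{\alpha_{j+1}(n,k_n)}(\mathbf{u}^{(n)}_{j+1})$ to have only small connected components, precluding an optimal configuration. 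The resulting limit $\mathbf{x}_\infty$ is a continuous $\beta_\infty$-rotation path, and upper semicontinuity of $\alcc$ combined with Hausdorff convergence of $C_n$ to $S_{\mathbf{x}_\infty}$ gives $\limsup_n W_n \le \lambda(S_{\mathbf{x}_\infty}) \le \sofaconstbeta(\beta_\infty) \le \sofaconst$, completing the proof.
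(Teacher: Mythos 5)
Your overall skeleton matches the paper's: the easy inequality $W_n \ge \sofaconst$ via Proposition~\ref{prop:sofa-fg-bounds}(iii), then a compactness/Arzel\`a--Ascoli argument for $\limsup_n W_n \le \sofaconst$. The easy direction is fine. The hard direction, however, has a genuine gap at exactly the point you flag as the ``principal obstacle,'' and the fix you sketch would not work as stated.

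The problem is that the optimizer $\mathbf{u}^{(n)}$ is far from unique, and your proposed equicontinuity argument ignores this. The constraints in the definition of $g_{\boldsymbol{\alpha}}^{\beta_1,\beta_2}$ are one-sided: replacing $\mathbf{u}^{(n)}_j$ by anything coordinatewise larger (``loosening'' the $j$th hallway so it is no longer binding on $C_n$) does not change the intersection at all. So two consecutive optimal coordinates $\mathbf{u}^{(n)}_j$ and $\mathbf{u}^{(n)}_{j+1}$ can be an $O(1)$ distance apart even within the bounded box $\Omega_{\boldsymbol{\alpha}}^{\beta_1,\beta_2}$, simply because one or both of those hallways is slack; the claim that a large jump forces $L_{\alpha_j}(\mathbf{u}^{(n)}_j) \cap L_{\alpha_{j+1}}(\mathbf{u}^{(n)}_{j+1})$ to have only small components is false in that regime, so equicontinuity of the piecewise-linear interpolant does not follow. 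Consequently your contribution (ii), which already needs $\sum_j \|\mathbf{u}^{(n)}_{j+1} - \mathbf{u}^{(n)}_j\| \to 0$ and not merely boundedness, is not under control.

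The paper closes this hole with two moves you would need to import. First, it chooses $\mathbf{u}^{(n)}$ to be the \emph{coordinatewise minimal} optimizer, which removes the slack ambiguity. Second, rather than interpolating $\mathbf{u}^{(n)}$ directly, it defines
$U^{(n)}(t) = \max_{(x,y)\in P_n}(x\cos t + y\sin t - 1)$ and $V^{(n)}(t) = \max_{(x,y)\in P_n}(-x\sin t + y\cos t - 1)$,
the support-type functions of the optimal component $P_n$ itself. These are uniformly Lipschitz (a fact borrowed from Gerver's paper, using only that the $P_n$ have uniformly bounded diameter), and the minimality choice forces them to agree with $\mathbf{u}^{(n)}$ at the grid angles. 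That is what makes Arzel\`a--Ascoli applicable and gives the quantitative Hausdorff estimate between $P_n$ and its smoothed version $T_n$. Without some substitute for these two steps, the convergence argument does not go through.
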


To motivate the statement and proof of the theorem, note that 
the shape that achieves the area $G_{\boldsymbol{\alpha}}^{\beta_1,\beta_2}$ in the corresponding optimization problem is not generally a moving sofa shape, since it is chosen to satisfy only a finite number of the constraints a true moving sofa shape needs to satisfy. However, the idea is that we can take a sequence of these optimal shapes from optimization problems with longer and longer sequences of angles that are increasingly tightly spaced, and, through a limiting procedure (essentially a compactness argument), construct a moving sofa shape whose area is no less than the limit of the areas of the optimal shapes. This will establish that $\sofaconst$ can be approached from above by the numbers $W_n$, which are defined in terms of the finite-dimensional optimization problems. (In particular, this implies the rather weak statement that $\sofaconst$ is a computable number.)

\begin{proof}[Proof of Theorem~\ref{thm:conv-moving-sofa}]
Start by noting that, by Proposition~\ref{prop:sofa-fg-bounds}(iii), $W_n\ge \sofaconstbeta(\beta)$ for all $\pi/3\le\beta\le\pi/2$, so, because of \eqref{eq:sofaconst-beta0}, $W_n \ge \sofaconst$ for all $n$, and therefore
$\liminf_{n\to\infty} W_n \ge \sofaconst$.
Thus, it is enough to prove that $\sofaconst \ge \limsup_{n\to\infty} W_n$. 

    For each $n\ge 6$, denote by $k^*_n$ the smallest value of $k$ for which the maximum in the definition $W_n$ is attained, let $\beta_n' = \gamma_1(n,k_n^*)$, 
    and let $\beta_n''=\gamma_2(n,k_n^*)$. 
    There is subsequence of $W_n$ converging to its limit superior. Furthermore, the values of $\beta_n'$ in this subsequence have an accumulation point.
    Therefore, let $n_m$ denote the indices of a subsequence of $W_n$ converging to its limit superior such that $\beta_n'$ (and therefore also $\beta_n''$) converges to some limiting angle $\beta \in [\pi/3,\pi/2]$.

Let 
$\mathbf{u}^{(n)}=\left(u_1^{(n)},\ldots,u_{2(n-{k^*_n}-1)}^{(n)}\right)$ denote a point in
    $\Omega_{\boldsymbol{\alpha}(n,k^*_n)}^{\beta'_n,\beta''_n}$
    where $g_{\boldsymbol{\alpha}(n,{k^*_n})}^{\beta_n',\beta_n''}$ attains its maximum value, which (it is immediate from the definitions) is equal to $G_{\boldsymbol{\alpha}(n,{k^*_n})}^{\beta_n',\beta_n''}=W_n$. Moreover, as more than one point may attain this value, let $\mathbf{u}^{(n)}$ be chosen to be minimal under the coordinatewise partial order with respect to this property.

    Let $P_n$ be a largest area connected component of 
    $$H\cap \bigcap_{j=1}^{n-{k^*_n}-1} L_{j\pi/2n}\left(u^{(n)}_{2j-1},u^{(n)}_{2j}\right)\cap B\left(\beta_n',\beta_n''\right).$$
    Again by the definitions, $\lambda(P_n)=W_n$.
    Note that the diameter of $P_n$ is bounded from above by a universal constant; this is easy to check and is left as an exercise.
    
    Now, of course $P_n$ is not a moving sofa shape, but we will show that it approximates one along a suitable subsequence. To this end, define functions $U^{(n)}:[0,\beta]\to \R$, $V^{(n)}:[0,\beta]\to \R$ by
\begin{align*}
U^{(n)}(t) &= \max_{(x,y)\in P_n} \left( x\cos t+y\sin t - 1 \right), \\
V^{(n)}(t) &= \max_{(x,y)\in P_n} \left( -x\sin t+y\cos t - 1 \right).
\end{align*}
We note that $U^{(n)}(t)$ and $V^{(n)}(t)$ have the following properties: first, they are Lipschitz-continuous with a uniform (independent of $n$) Lipschitz constant; see pp.\ 269--270 of Gerver's paper \cite{gerver} for the proof, which uses the fact that the diameters of $P_n$ are bounded.

Second, the fact that $P_n \subset H$ implies that 
$\displaystyle V^{(n)}(0) = \max_{(x,y)\in P_n} (y-1) \le 0$. This in turn implies that
\begin{align} 
P_n &\subseteq 
(-\infty,U^{(n)}(0)+1] \times [0,V^{(n)}(0)+1]
\nonumber \\ & \subseteq
(-\infty,U^{(n)}(0)+1] \times [V^{(n)}(0),V^{(n)}(0)+1] \nonumber \\&=
(U^{(n)}(0),V^{(n)}(0)) + \armh.
\label{eq:third-observation} 
\end{align}

Third, we have
$$ \left(U^{(n)}(j \pi/2n), V^{(n)}(j \pi/2n) \right) = \left(u^{(n)}_{2j-1}, u^{(n)}_{2j} \right) $$
for all $j=1,\ldots,n-{k^*_n}$; that is, $U^{(n)}(t)$ and $V^{(n)}(t)$ continuously interpolate the odd and even (respectively) coordinates of the vector $\mathbf{u}^{(n)}$. Indeed, the relation $P_n \subset L_{j\pi/2n}\left(u^{(n)}_{2j-1},u^{(n)}_{2j}\right)$ implies trivially that 
$$U^{(n)}(j \pi/2n) \le u^{(n)}_{2j-1} \ \ \textrm{ and }\ \ 
V^{(n)}(j \pi/2n) \le u^{(n)}_{2j}.$$
However, if we had a strict inequality $U^{(n)}(j \pi/2n) < u^{(n)}_{2j-1}$ (respectively, $V^{(n)}(j \pi/2n) < u^{(n)}_{2j}$), that would imply that 
replacing $L_{j\pi/2n}\left(u^{(n)}_{2j-1},u^{(n)}_{2j}\right)$ by $L_{j\pi/2n}\left(u^{(n)}_{2j-1}-\epsilon,u^{(n)}_{2j}\right)$ (respectively, $L_{j\pi/2n}\left(u^{(n)}_{2j-1},u^{(n)}_{2j}-\epsilon\right)$) for some small positive $\epsilon$ in the definition of $P_n$ would not decrease the area, in contradiction to the maximality property defining $\mathbf{u}^{(n)}$.

We now define a smoothed version $T_n$ of the polygon $P_n$ by letting
 \begin{align*}
    T_n =& P_n\cap\left(\armh+(U^{(n)}(0),V^{(n)}(0))\right)
    \cap\bigcap_{0\le t\le\beta} L_t(U^{(n)}(t),V^{(n)}(t))\\
    &\cap    \left( \rotmat{\beta}\Big(\armv\Big)+(U^{(n)}(\beta),V^{(n)}(\beta)) \right)
    \\ =&
    P_n\cap\bigcap_{0\le t\le\beta} L_t(U^{(n)}(t),V^{(n)}(t))\cap    \left( \rotmat{\beta}\Big(\armv\Big)+(U^{(n)}(\beta),V^{(n)}(\beta)) \right),
\end{align*}
where the first equality sign is a definition, and the second equality follows from \eqref{eq:third-observation}.

Recall that the Hausdorff distance between compact sets $A$ and $B$ is the infimum of all $\epsilon>0$
such that for all $\mathbf{a}\in A$, there is $\mathbf{b}\in B$ with $\|\mathbf{a}-\mathbf{b}\|\le\epsilon$
and for all $\mathbf{b}\in B$, there is $\mathbf{a}\in A$ with $\|\mathbf{a}-\mathbf{b}\|\le\epsilon$.
It is known that the area of a compact set is upper semicontinuous with respect to the Hausdorff distance.
That is, if $A_n\to A$ under the Hausdorff distance for compact sets $A_n$, then $\lambda(A)\ge\limsup_{n\to\infty}\lambda(A_n)$; see \cite[Theorem~12.3.6]{schneider-weil}.

We claim that the Hausdorff distance between the sets $T_{n_m}$ and $P_{n_m}$ goes to $0$ as $n\to\infty$.
To see this, let $(x,y)\in P_{n_m}\setminus T_{n_m}$.
From the fact that $(x,y)\in P_{n_m}$ we have that
\begin{equation}
\label{eq:ineqPnm1}
\begin{aligned}
    x\cos t + y\sin t &\ge U^{(n_m)}(t) \  \textrm{ or } \ 
    -x\sin t + y\cos t \ge V^{(n_m)}(t) 
\end{aligned}
\end{equation}
for all $t = j\pi/2n_m$, where $j=1,\ldots,n_m-k_{n_m}^*-1$. Moreover, we have $y>V^{(n_m)}(0)$ and 
\begin{equation}
    \label{eq:ineqPnm2}
    x\cos t + y\sin t \ge U^{(n_m)}(t)
\end{equation}
for $t = \beta'_{n_m} = (n_m - k_{n_m}*)\pi/2n_m$.
We want to show that there exists $\delta_m\to 0$ such that
\begin{equation}
\begin{aligned}
\label{eq:ineqTnm1}
    x\cos t' + (y+\delta_m)\sin t' &\ge U^{(n_m)}(t') \  \textrm{ or } \\ 
    -x\sin t' + (y+\delta_m)\cos t' &\ge V^{(n_m)}(t').
\end{aligned}
\end{equation}
for all $0<t'<\beta$ and 
\begin{equation}
\label{eq:ineqTnm2}
    x\cos \beta + (y+\delta_m)\sin \beta \ge U^{(n_m)}(\beta)\text.
\end{equation}
We claim that $\delta_m = C((1/n_m) + |\beta-\beta'_{n_m}|)^{1/2}$ suffices, where $C$ is some constant.
First, if $\beta<\pi/2$, then for $t'\in[(1/n_m)^{1/2},\beta]$, we have \eqref{eq:ineqTnm1} 
from the uniform Lipschitz continuity of $U^{(n_m)}(t')$, $V^{(n_m)}(t')$, and the other terms as functions of $t'$
(recall $x$ and $y$ are uniformly bounded), from the fact that we have \eqref{eq:ineqPnm1} for some $t$ with
$|t-t'|<((1/n_m) + |\beta-\beta'_{n_m}|)$, and from $|\sin t'|, |\cos t'| > \tfrac12((1/n_m) + |\beta-\beta'_{n_m}|)^{1/2}$.
For $t'<(1/n_m)^{1/2}$, the fact that $y>V^{(n_m)}(0)$ and Lipschitz continuity suffice to give the second clause of \eqref{eq:ineqTnm1}.
Finally, \eqref{eq:ineqTnm2} is satisfied due to Lipschitz continuity and the inequality \eqref{eq:ineqPnm2}.
The case of $\beta=\pi/2$ can be worked out similarly.
Therefore, for every $(x,y)\in P_{n_m}\setminus T_{n_m}$, we can construct a point $(x,y')\in T_{n_m}$, with $|y'-y|\le \delta_m$ with $\delta_m\to0$.

We now use the fact that the vector-valued function $(U^{(n_m)}(t),V^{(n_m)}(t))$ is uniformly Lipschitz to conclude using the Arzel\`a-Ascoli theorem that
it has a subsequence (which we still denote by $n_m$, to avoid clutter) such that the ``anchored'' version of the function $(U^{(n_m)}(t),V^{(n_m)}(t))$, namely 
$$(U^{(n_m)}(t)-U^{(n_m)}(0), V^{(n_m)}(t)-V^{(n_m)}(0))$$ 
converges in the supremum norm to some limiting function
    $\mathbf{x}:[0,\beta]\to\R^2$, with the same Lipschitz constant, which satisfies $\mathbf{x}(0)=(0,0)$; that is, the limiting function $\mathbf{x}$ is a $\beta$-rotation path. Now let
    \begin{equation*}
	\begin{aligned}
    T_\infty =& \armh
    \cap\bigcap_{0\le t\le\beta} L_t(\mathbf{x}(t)) 
    \cap    \left(\rotmat{\beta}\left(\armv\right)+\mathbf{x}(\beta)\right).
	\end{aligned}
    \end{equation*}
    Since the Hausdorff distances of $P_{n_m}$ to $T_{n_m}$ and of $T_{n_m}-(U^{(n)}(0),V^{(n)}(0))$ to $T_\infty\cap \left(T_{n_m}-(U^{(n)}(0),U^{(n)}(0))\right)$ both approach zero as $m\to\infty$, 
    we have that the largest connected component of $T_\infty$ has an area at least as large as $\lim_{m\to\infty}  \lambda(P_{n_m}) = \lim\sup_{n\to\infty} W_n$.
    On the other hand, $T_\infty$ is of the form \eqref{eq:sx-intersections} for a $\beta$-rotation path $\mathbf{x}(t)$, so, by \eqref{eq:sofaconst-characterization}, its area is bounded from above by $\sofaconst$. We have shown that $\lim\sup_{n\to\infty} W_n\le\sofaconst$, and the proof is finished.
\end{proof}

We remark that in view of Theorem~\ref{thm:angle-bound}, it is easy to see that Theorem~\ref{thm:conv-moving-sofa} remains true if we replace the range $1\le k\le \lceil n/3 \rceil$ of values of $k$ in \eqref{eq:def-wn} with the smaller (and therefore computationally more efficient) range $1\le k\le \lceil n/9 \rceil$.

\section{An algorithmic proof scheme for moving sofa area bounds}

\label{sec:algorithm}

The theoretical framework we developed in the previous section reduces the problem of deriving upper bounds for $\sofaconst$ to that of proving upper bounds for the function $G_{\boldsymbol{\alpha}}^{\beta_1,\beta_2}$. Since this function is defined in terms of solutions to a family of optimization problems in finite-dimensional spaces, this is already an important conceptual advance. However, from a practical standpoint it remains to develop and implement a practical, efficient algorithm for solving optimization problems in this class. Our goal in this section is to present such an algorithm and establish its correctness.

Our computational strategy is a variant of the \textbf{geometric branch and bound} optimization technique \cite{ratscheck}.
Recall from Lemma~\ref{lem:supmax} that the maximum of the function $g_{\boldsymbol{\alpha}}^{\beta_1,\beta_2}$ is attained in a box (a Cartesian product of intervals) $\Omega_{\boldsymbol{\alpha}}^{\beta_1,\beta_2} \subset \R^{2k}$.
Our strategy is to break up $\Omega_{\boldsymbol{\alpha}}^{\beta_1,\beta_2}$ into sub-boxes.
On each box $E$ being considered, we will compute a quantity $\Gamma_{\boldsymbol{\alpha}}^{\beta_1,\beta_2}(E)$, which is an upper bound for $g_{\boldsymbol{\alpha}}^{\beta_1,\beta_2}(\mathbf{u})$ that holds uniformly for all $\mathbf{u} \in E$. In many cases this bound will not be an effective one, in the sense that there is a possibility that the box contains the point maximizing $g_{\boldsymbol{\alpha}}^{\beta_1,\beta_2}$; in such a case the box will be subdivided into two further boxes $E_1$ and $E_2$, which will be inserted into a queue to be considered later. Other boxes lead to effective bounds (that is, bounds that are smaller than a number already established as a lower bound for $G_{\boldsymbol{\alpha}}^{\beta_1,\beta_2}$) and need not be considered further. By organizing the computation efficiently, practical bounds can be established in a reasonable time, at least for small values of~$k$.

To make the idea precise, we introduce a few more definitions. Given two intervals $I=[a,b], J=[c,d] \subseteq \R$ and $\alpha \in[0,\pi/2]$, define
$$
\widehat{L}_\alpha(I,J) = \bigcup_{u \in I, v \in J} L_\alpha(u,v).
$$
Note that $\widehat{L}_\alpha(I,J)$ can also be expressed as a Minkowski sum of $L_\alpha(0,0)$ with the rotated rectangle $\rotmat{\alpha}(I\times J)$; in particular, it belongs to the class of sets known as \textbf{Nef polygons}, which are defined as planar sets that can be obtained from a finite set of half-planes by applying set intersection and complementation operations (see the Appendix for further discussion of the relevance of this fact to our software implementation of the algorithm).
Now, for a box $E=I_1\times \ldots \times I_{2k} \subset \R^2$, define
\begin{equation}\label{eq:defcalF}
\Gamma_{\boldsymbol{\alpha}}^{\beta_1,\beta_2}(E) = 
\alcc\left(
H \cap \bigcap_{j=1}^k 
\widehat{L}_{\alpha_j}(I_{2j-1},I_{2j})
\cap B(\beta_1,\beta_2)
\right)\text.
\end{equation}
Thus, by the definitions we have trivially that
\begin{equation} \label{eq:upperbound-trivially}
\sup_{\mathbf{u}\in E} g_{\boldsymbol{\alpha}}(\mathbf{u})
\le \Gamma_{\boldsymbol{\alpha}}^{\beta_1,\beta_2}(E).
\end{equation}
Next, given a box $E=I_1\times\ldots\times I_{2k}$ where $I_j=[a_j,b_j]$, let
$$
P_{\textrm{mid}}(E) = \left( \frac{a_1+b_1}{2}, \ldots, \frac{a_{2k}+b_{2k}}{2}\right)
$$
denote its midpoint.
We also assume that some rule is given to associate with each box $E$ a coordinate $i=\operatorname{ind}(E) \in \{1,\ldots,2k\}$, called the \textbf{splitting index} of $E$. This index will be used by the algorithm to split $E$ into two sub-boxes, which we denote by $\operatorname{split}_{i,1}(E)$ and $\operatorname{split}_{i,2}(E)$, and which are defined as
\begin{align*}
\operatorname{split}_{i,1}(E) &= I_1\times \ldots \times I_{i-1} \times \left[a_i,\tfrac12(a_i+b_i)\right] \times I_{i+1}\times \ldots I_{2k}, \\
\operatorname{split}_{i,2}(E) &= I_1\times \ldots \times I_{i-1} \times \left[\tfrac12(a_i+b_i),b_i\right] \times I_{i+1}\times \ldots I_{2k}.
\end{align*}
We assume that the mapping $E\mapsto \operatorname{ind}(E)$ has the property that, if the mapping $E \mapsto \operatorname{split}_{\operatorname{ind}(E),j}(E)$ is applied iteratively, with arbitrary choices of $j\in\{1,2\}$ at each step and starting from some initial value of $E$, the resulting sequence of splitting indices $i_1,i_2,\ldots$ contains each possible coordinate infinitely many times. A mapping satisfying this assumption is referred to as a \textbf{splitting rule}.

The algorithm is based on the standard data structure of a \textbf{priority queue} \cite{CLRS} used to hold boxes that are still under consideration. Recall that in a priority queue, each element of the queue is associated with a numerical value called its priority, and that the queue realizes operations of pushing a new element into the queue with a given priority, and popping the highest priority element from the queue. In our application, the priority of each box $E$ will be set to a value denoted $\Pi(E)$, where the mapping $E\mapsto \Pi(E)$ is given and is assumed to satisfy
\begin{equation} \label{eq:priority-map-condition}
\Pi(E) \ge \Gamma_{\boldsymbol{\alpha}}^{\beta_1,\beta_2}(E).
\end{equation}
Aside from this requirement, the precise choice of mapping is an implementation decision.
(A key point here is that setting $\Pi(E)$ \emph{equal} to $\Gamma_{\boldsymbol{\alpha}}^{\beta_1,\beta_2}(E)$ is conceptually the simplest choice, but from the practical point of view of minimizing programming complexity and running time it may not be optimal; see the Appendix for further discussion of this point.) Note that, since boxes are popped from the queue to be inspected by the algorithm in decreasing order of their priority, this ensures that the algorithm pursues successive improvements to the upper bound it obtains in a greedy fashion.

The algorithm also computes a lower bound on $G_{\boldsymbol{\alpha}}^{\beta_1,\beta_2}$ by evaluating $g_{\boldsymbol{\alpha}}^{\beta_1,\beta_2}(\mathbf{u})$
at the midpoint of every box it processes and keeping track of the largest value observed. This lower bound is used to discard boxes in which it is impossible for the maximum to lie. The variable keeping track of the lower bound is initialized to some number $\ell_0$ known to be a lower bound for $G_{\boldsymbol{\alpha}}^{\beta_1,\beta_2}$. In our software implementation we used the value
$$
\ell_0 = \begin{cases} 0 & \textrm{if }\beta_2 < \pi/2, \\ 11/5 & \textrm{if }\beta_2 = \pi/2,
\end{cases}
$$
this being a valid choice thanks to the fact that (by Proposition~\ref{prop:sofa-fg-bounds}(iii)) $G_{\boldsymbol{\alpha}}^{\beta_1,\pi/2} \ge \sofaconstbeta(\pi/2) \ge \gerverconst = 2.2195\ldots > 2.2=11/5$. Note that simply setting $\ell_0=0$ in all cases would also result in a valid algorithm, but would result in a slight waste of computation time compared to the definition above.

With this setup, we can now describe the algorithm, given in pseudocode in Listing~\ref{alg:branchandbound}. 

The next proposition is key to proving the algorithm's correctness.

\begin{listing}
\begin{mdframed}[backgroundcolor=codebgcolor] 
\begin{algorithmic}
\State $\varname{box\_queue} \gets $ an empty priority queue of boxes
\State $\varname{initial\_box} \gets $ box representing $\Omega_{\boldsymbol{\alpha}}^{\beta_1,\beta_2}$, computed according to the \State \phantom{$\varname{initial\_box} \gets $} function of $\boldsymbol{\alpha}, \beta_1, \beta_2$ described in Lemma~\ref{lem:supmax}
\smallskip
\State \keyword{push} $\varname{initial\_box}$ into $\varname{box\_queue}$ with priority $\Pi(\varname{initial\_box})$
\smallskip
\State $\varname{best\_lower\_bound\_so\_far} \gets $ the initial lower bound $\ell_0$
\medskip
\While{true}
\medskip
\State \keyword{pop} highest priority element of $\varname{box\_queue}$ into $\varname{current\_box}$
\State $\varname{current\_box\_lower\_bound} \gets g_{\boldsymbol{\alpha}}^{\beta_1,\beta_2}(P_{\textrm{mid}}(\varname{current\_box}))$
\State $\varname{best\_upper\_bound\_so\_far} \gets \Pi(\varname{current\_box})$
\medskip
\If{$\varname{current\_box\_lower\_bound} > \varname{best\_lower\_bound\_so\_far}$}
\State $\varname{best\_lower\_bound\_so\_far} \gets \varname{current\_box\_lower\_bound}$
\EndIf
\medskip
\State $\varname{i} \gets \operatorname{ind}(\varname{current\_box})$
\smallskip
\smallskip
\For{$\varname{j}=1,2$}
\State $\varname{new\_box} \gets \operatorname{split}_{\varname{i},\varname{j}}(\varname{current\_box})$ 
\If{$\Pi(\varname{new\_box}) \ge \varname{best\_lower\_bound\_so\_far}$
}
\State \keyword{push} $\varname{new\_box}$ into $\varname{box\_queue}$ with priority
$\Pi(\varname{new\_box})$
\EndIf
\EndFor
\medskip
\State \textbf{Reporting point:} print the values of $\varname{best\_upper\_bound\_so\_far}$
\State \phantom{\textbf{Reporting point:}} and $\varname{best\_lower\_bound\_so\_far}$
\medskip
\EndWhile
\end{algorithmic}
\end{mdframed}
\caption{The algorithm for computing bounds for $G_{\boldsymbol{\alpha}}^{\beta_1,\beta_2}$.}
\label{alg:branchandbound}
\end{listing}

\begin{prop}\label{prop:lead-box}
    Any box $\tilde E$ which is the highest priority box in the queue $\varname{box\_queue}$ at some step satisfies
\begin{equation} \label{eq:lead-box}
\sup\{g_{\boldsymbol{\alpha}}^{\beta_1,\beta_2}(\mathbf{u}):\mathbf{u}\in\tilde E\} \le G_{\boldsymbol{\alpha}}^{\beta_1,\beta_2} \le \Pi(\tilde E)\text.
\end{equation}
\end{prop}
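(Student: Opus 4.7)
The left inequality $\sup\{g_{\boldsymbol{\alpha}}^{\beta_1,\beta_2}(\mathbf{u}):\mathbf{u}\in\tilde E\} \le G_{\boldsymbol{\alpha}}^{\beta_1,\beta_2}$ is immediate from the definition of $G_{\boldsymbol{\alpha}}^{\beta_1,\beta_2}$ as the supremum of $g_{\boldsymbol{\alpha}}^{\beta_1,\beta_2}$ over $\R^{2k}$, so the real content is the right inequality $G_{\boldsymbol{\alpha}}^{\beta_1,\beta_2} \le \Pi(\tilde E)$. The plan is to establish a loop invariant for the main \keyword{while} loop asserting that, just before each \keyword{pop}, every global maximizer $\mathbf{u}^*$ of $g_{\boldsymbol{\alpha}}^{\beta_1,\beta_2}$ lies in some box currently held in \varname{box\_queue}. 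Combined with the monotonicity chain $\Pi(E)\ge \Gamma_{\boldsymbol{\alpha}}^{\beta_1,\beta_2}(E) \ge \sup_{\mathbf{u}\in E} g_{\boldsymbol{\alpha}}^{\beta_1,\beta_2}(\mathbf{u})$ coming from \eqref{eq:upperbound-trivially} and \eqref{eq:priority-map-condition}, this invariant will force the highest-priority box $\tilde E$ at that moment to satisfy $\Pi(\tilde E) \ge G_{\boldsymbol{\alpha}}^{\beta_1,\beta_2}$.

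To set this up I would first observe that Lemma~\ref{lem:supmax} guarantees a maximizer $\mathbf{u}^*\in\Omega_{\boldsymbol{\alpha}}^{\beta_1,\beta_2}$ exists, and record the trivial side invariant $\varname{best\_lower\_bound\_so\_far}\le G_{\boldsymbol{\alpha}}^{\beta_1,\beta_2}$: the initial value $\ell_0$ is a valid lower bound by construction, and each subsequent update assigns a value of the form $g_{\boldsymbol{\alpha}}^{\beta_1,\beta_2}(P_{\textrm{mid}}(\cdot))\le G_{\boldsymbol{\alpha}}^{\beta_1,\beta_2}$. The main invariant holds at initialization since $\varname{box\_queue}=\{\Omega_{\boldsymbol{\alpha}}^{\beta_1,\beta_2}\}$ and $\mathbf{u}^*\in\Omega_{\boldsymbol{\alpha}}^{\beta_1,\beta_2}$.

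The inductive step is the only delicate point. Assuming the invariant holds before an iteration, fix some $E\in\varname{box\_queue}$ with $\mathbf{u}^*\in E$; the only nontrivial case is $E=\varname{current\_box}$, since otherwise $E$ survives in the queue untouched and the invariant is preserved automatically. In that case $\mathbf{u}^*$ lies in one of the children $E':=\operatorname{split}_{i,j}(\varname{current\_box})$ with $i=\operatorname{ind}(\varname{current\_box})$, and the monotonicity chain combined with the side invariant gives
\[
\Pi(E')\ge \Gamma_{\boldsymbol{\alpha}}^{\beta_1,\beta_2}(E')\ge g_{\boldsymbol{\alpha}}^{\beta_1,\beta_2}(\mathbf{u}^*)=G_{\boldsymbol{\alpha}}^{\beta_1,\beta_2}\ge \varname{best\_lower\_bound\_so\_far},
\]
so $E'$ passes the pruning test and is pushed back into the queue, preserving the invariant. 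The conclusion then follows: at the step when $\tilde E$ has the highest priority, pick any $E\in\varname{box\_queue}$ containing $\mathbf{u}^*$ and deduce $\Pi(\tilde E)\ge \Pi(E)\ge G_{\boldsymbol{\alpha}}^{\beta_1,\beta_2}$. The crux of the argument, and the reason condition \eqref{eq:priority-map-condition} is imposed on the priority map in the algorithm's setup, is precisely to guarantee that no box containing a global maximizer is ever accidentally pruned.
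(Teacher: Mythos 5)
Your proof is correct, but it takes a structurally different route from the paper's. The paper introduces the collections $Q_n$ (boxes in the queue) and $D_n$ (discarded boxes), proves by induction that $\Omega_{\boldsymbol{\alpha}}^{\beta_1,\beta_2} = \bigcup_{E\in Q_n\cup D_n} E$, shows separately that every discarded box $X\in D_n$ has $\Pi(X) < G_{\boldsymbol{\alpha}}^{\beta_1,\beta_2}$, and then deduces the bound by taking a max over the covering. Your approach instead maintains the loop invariant that some box in $\varname{box\_queue}$ always contains a fixed global maximizer $\mathbf{u}^*$, whose existence you correctly obtain from Lemma~\ref{lem:supmax}, and shows via the pruning test and the chain $\Pi \ge \Gamma \ge \sup g$ that this invariant survives each iteration. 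Both arguments rest on the same two facts (the pruning test can never discard a box whose $\Pi$ dominates $G_{\boldsymbol{\alpha}}^{\beta_1,\beta_2}$, and the priority map majorizes the supremum), but yours tracks a single point while the paper tracks a covering of the whole domain. Your version is arguably leaner and reads more like a standard branch-and-bound correctness argument; the paper's decomposition has the minor advantage that it needs no explicit appeal to the existence of a maximizer in $\Omega_{\boldsymbol{\alpha}}^{\beta_1,\beta_2}$ (only to the sup being achieved there), though in this setting Lemma~\ref{lem:supmax} supplies the maximizer anyway, so nothing is lost. One small presentational point: when you conclude by writing $\Pi(\tilde E)\ge \Pi(E)\ge G_{\boldsymbol{\alpha}}^{\beta_1,\beta_2}$, it would be slightly cleaner to note explicitly that the second inequality comes from $\Pi(E)\ge\Gamma_{\boldsymbol{\alpha}}^{\beta_1,\beta_2}(E)\ge g_{\boldsymbol{\alpha}}^{\beta_1,\beta_2}(\mathbf{u}^*)=G_{\boldsymbol{\alpha}}^{\beta_1,\beta_2}$, exactly as you already did inside the inductive step.
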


\begin{proof}
First, the lower inequality holds for \emph{all} boxes in the queue, simply because the value $g_{\boldsymbol{\alpha}}^{\beta_1,\beta_2}(\mathbf{u})$ for any $\mathbf{u}\in\Omega_{\boldsymbol{\alpha}}^{\beta_1,\beta_2}$ is a lower bound on its maximum over all $\mathbf{u}\in\Omega_{\boldsymbol{\alpha}}^{\beta_1,\beta_2}$.

Next, let $Q_n$ denote the collection of boxes in the priority queue after $n$ iterations of the \texttt{while} loop (with $Q_0$ being the initialized queue containing the single box $\Omega_{\boldsymbol{\alpha}}^{\beta_1,\beta_2}$), and let $D_n$ denote the collection of boxes that were discarded (not pushed into the priority queue during the execution of the \texttt{if} clause inside the \texttt{for} loop) during the first $n$ iterations of the \texttt{while} loop. Then we first note that for all $n$, the relation
\begin{equation} \label{eq:confspace-decom}
\Omega_{\boldsymbol{\alpha}}^{\beta_1,\beta_2} = \bigcup_{E \in Q_n\cup D_n} E
\end{equation}
holds.
Indeed, this is easily proved by induction on $n$: if we denote by $X$ the highest priority element in $Q_n$, then during the $(n+1)$th iteration of the \texttt{while} loop, $X$ is subdivided into two boxes $X=X_1\cup X_2$, and each of $X_1, X_2$ is either pushed into the priority queue (i.e., becomes an element of $Q_{n+1}$) or discarded (i.e., becomes an element of $D_{n+1}$), so we have that $\Omega_{\boldsymbol{\alpha}}^{\beta_1,\beta_2} = \bigcup_{E \in Q_{n+1}\cup D_{n+1}} E$, completing the inductive step.

Second, note that for any box $X\in D_n$, since $X$ was discarded during the $k$th iteration of the \texttt{while} loop for some $1\le k\le n$, we have that 
$\Pi(X)$
is smaller than the value of $\varname{best\_lower\_bound\_so\_far}$ during that iteration. But $\varname{best\_lower\_bound\_so\_far}$ is always assigned a value of the form $g_{\boldsymbol{\alpha}}^{\beta_1,\beta_2}(\mathbf{u})$ for some $\mathbf{u}\in R$ and is therefore bounded from above by $G_{\boldsymbol{\alpha}}^{\beta_1,\beta_2}$,
so we have established that
\begin{equation} \label{eq:xupperbound-le}
    \Pi(X) < G_{\boldsymbol{\alpha}}^{\beta_1,\beta_2} \qquad (X \in D_n).
\end{equation}
The relations \eqref{eq:supmax}, \eqref{eq:upperbound-trivially}, \eqref{eq:priority-map-condition}, \eqref{eq:confspace-decom}, and \eqref{eq:xupperbound-le} now imply that
\begin{align*}
G_{\boldsymbol{\alpha}}^{\beta_1,\beta_2} &= \max \left\{ g_{\boldsymbol{\alpha}}^{\beta_1,\beta_2}(\mathbf{u})
\,:\,
\mathbf{u}\in \Omega_{\boldsymbol{\alpha}}^{\beta_1,\beta_2} \right\} \\ &=
\max_{E \in Q_n\cup D_n} \left(
\sup \left\{ g_{\boldsymbol{\alpha}}^{\beta_1,\beta_2}(\mathbf{u})
\,:\,
\mathbf{u}\in E \right\}
\right)
\le
\max_{E \in Q_n\cup D_n} \Gamma_{\boldsymbol{\alpha}}^{\beta_1,\beta_2}(E)
\\ &\le \max_{E \in Q_n\cup D_n} \Pi(E)
=
\max_{E \in Q_n} \Pi(E).
\end{align*}
Finally, $\max_{E \in Q_n} \Pi(E) = \Pi(\tilde E)$, since $\tilde E$ was assumed to be the box with highest priority among the elements of $Q_n$, so we get the upper inequality in \eqref{eq:lead-box}, which finishes the proof.
\end{proof}

We immediately have the correctness of the algorithm as a corollary:

\begin{thm}[Correctness of the algorithm]
    Any \,\!\!\! value of the  variable \ $\varname{best\_upper\_bound\_so\_far}$ reported by the algorithm is an upper bound for $G_{\boldsymbol{\alpha}}^{\beta_1,\beta_2}$.
\end{thm}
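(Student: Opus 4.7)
The plan is to observe that the theorem follows essentially immediately from Proposition~\ref{prop:lead-box}, which already encapsulates the analytical work. The key point is that the reported value $\varname{best\_upper\_bound\_so\_far}$ is, by direct inspection of the pseudocode, assigned exactly the value $\Pi(\varname{current\_box})$ inside each iteration of the \texttt{while} loop, \emph{before} any splitting or discarding occurs. Here $\varname{current\_box}$ is precisely the box just popped as the highest-priority element of $\varname{box\_queue}$.

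First, I would trace through the pseudocode to confirm two things: (a) each reporting point is reached after the pop and the assignment $\varname{best\_upper\_bound\_so\_far} \gets \Pi(\varname{current\_box})$, with no intervening assignment to $\varname{best\_upper\_bound\_so\_far}$; and (b) at the moment of the pop, $\varname{current\_box}$ is the highest-priority box in $\varname{box\_queue}$, i.e., it qualifies as a box $\tilde E$ in the statement of Proposition~\ref{prop:lead-box}. Both claims are direct syntactic observations from the listing.

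Second, I would apply Proposition~\ref{prop:lead-box} to $\tilde E = \varname{current\_box}$, which yields $G_{\boldsymbol{\alpha}}^{\beta_1,\beta_2} \le \Pi(\tilde E) = \varname{best\_upper\_bound\_so\_far}$. Since this holds at every iteration where a reporting point is reached, every reported value is a valid upper bound for $G_{\boldsymbol{\alpha}}^{\beta_1,\beta_2}$, as required.

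There is no real obstacle here: Proposition~\ref{prop:lead-box} does all the analytic work (using the invariant \eqref{eq:confspace-decom}, the priority condition \eqref{eq:priority-map-condition}, and the trivial bound \eqref{eq:upperbound-trivially}), and the corollary amounts to nothing more than reading off that the reported quantity equals the priority of a lead box. The proof can therefore be written as a two- or three-sentence deduction citing Proposition~\ref{prop:lead-box}.
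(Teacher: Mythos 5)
Your proposal is correct and matches the paper's approach: the paper explicitly presents this theorem as an immediate corollary of Proposition~\ref{prop:lead-box} with no separate proof, and your argument simply spells out why the corollary is immediate (the reported value equals $\Pi(\tilde E)$ for a lead box $\tilde E$, to which the proposition applies directly).
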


Note that the correctness of the algorithm is not dependent on the assumption we made on the splitting index mapping $E\mapsto \operatorname{ind}(E)$ being a splitting rule. The importance of that assumption is explained by the following result, which also explains one sense in which assuming an equality in \eqref{eq:priority-map-condition} rather than an inequality provides a benefit (of a theoretical nature at least).

\begin{thm}[Asymptotic sharpness of the algorithm]
\label{thm:asym-sharpness}
Assume that the priority mapping $E\mapsto \Pi(E)$ is taken to be
\begin{equation} \label{eq:priority-map-sharpness}
\Pi(E) = \Gamma_{\boldsymbol{\alpha}}^{\beta_1,\beta_2}(E).
\end{equation}
Then the upper and lower bounds output by the algorithm both converge to $G_{\boldsymbol{\alpha}}^{\beta_1,\beta_2}$.
\end{thm}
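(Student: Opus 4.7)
My plan is to show that the values $U_n := \varname{best\_upper\_bound\_so\_far}$ reported at the $n$th iteration decrease monotonically to $G := G_{\boldsymbol{\alpha}}^{\beta_1,\beta_2}$, and that the values $L_n := \varname{best\_lower\_bound\_so\_far}$ increase monotonically to the same limit. Monotonicity of $L_n$ is immediate from the pseudocode. For $U_n$, the key observation is that $\Gamma_{\boldsymbol{\alpha}}^{\beta_1,\beta_2}$ is non-increasing under box inclusion, since shrinking any of the intervals $I_{2j-1}, I_{2j}$ in \eqref{eq:defcalF} can only shrink the set $\widehat{L}_{\alpha_j}(I_{2j-1}, I_{2j})$ and hence the full intersection; consequently, when the top-priority box $\tilde E$ is popped and replaced by its halves, each half has priority at most $\Gamma(\tilde E) = U_n$, while every other box in the queue already had priority at most $U_n$. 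Hence $U_{n+1} \le U_n$, and together with Proposition~\ref{prop:lead-box} this yields $U_n \searrow U_\infty \ge G$.

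Suppose for contradiction that $U_\infty \ge G + \varepsilon$ for some $\varepsilon > 0$. Form the tree $\mathcal T$ whose root is the initial box $\Omega_{\boldsymbol{\alpha}}^{\beta_1,\beta_2}$ and whose edges connect each popped box to those of its two halves that are subsequently popped by the algorithm. This tree has branching at most $2$, and it is infinite since the algorithm runs forever and pops exactly one box per iteration, so by K\"onig's lemma it contains an infinite descending chain $\Omega_{\boldsymbol{\alpha}}^{\beta_1,\beta_2} = E_{n_1} \supsetneq E_{n_2} \supsetneq \cdots$ of popped boxes. The splitting-rule assumption guarantees that the sequence of splitting indices $\operatorname{ind}(E_{n_k})$ hits every coordinate infinitely often, so the widths of $E_{n_k}$ shrink to zero in every direction and $\bigcap_k E_{n_k} = \{\mathbf{u}^*\}$ for some $\mathbf{u}^* \in \Omega_{\boldsymbol{\alpha}}^{\beta_1,\beta_2}$. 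Along the chain we have $\Gamma_{\boldsymbol{\alpha}}^{\beta_1,\beta_2}(E_{n_k}) = U_{n_k} \ge U_\infty \ge G + \varepsilon$.

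The crux of the argument is to show $\Gamma_{\boldsymbol{\alpha}}^{\beta_1,\beta_2}(E_{n_k}) \to g_{\boldsymbol{\alpha}}^{\beta_1,\beta_2}(\mathbf{u}^*)$. Write $A_0(\mathbf{u}) := H \cap \bigcap_j L_{\alpha_j}(\mathbf{u}_j) \cap B(\beta_1,\beta_2)$ and $\hat A(E)$ for the thickened intersection in \eqref{eq:defcalF}. Each thickened corridor $\widehat{L}_{\alpha_j}(I_{2j-1}^{(k)},I_{2j}^{(k)})$ decreases, as a subset of $\R^2$, to $L_{\alpha_j}(\mathbf{u}^*_j)$, so $\hat A(E_{n_k}) \downarrow A_0(\mathbf{u}^*)$ and $\lambda(\hat A(E_{n_k})) \to \lambda(A_0(\mathbf{u}^*))$ by monotone convergence. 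To promote this from $\lambda$ to $\alcc$, I would exploit the polygonal (Nef-polygonal) structure: the finitely many connected components $C_1,\ldots,C_m$ of $A_0(\mathbf{u}^*)$ are pairwise separated by some positive distance $d > 0$, and for $k$ large the Hausdorff distance between $\hat A(E_{n_k})$ and $A_0(\mathbf{u}^*)$ is smaller than $d/2$. This prevents any component of $\hat A(E_{n_k})$ from bridging two distinct $C_i$'s, so each component of $\hat A(E_{n_k})$ either lies in a shrinking neighborhood of a single $C_i$, with area tending to $\lambda(C_i)$, or is contained in the vanishing Hausdorff collar around $\partial A_0(\mathbf{u}^*)$, whose area goes to zero. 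Taking $\alcc$ and passing to the limit gives $\alcc(\hat A(E_{n_k})) \to \max_i \lambda(C_i) = g_{\boldsymbol{\alpha}}^{\beta_1,\beta_2}(\mathbf{u}^*)$. This component-stability step is the main technical obstacle — one has to handle or rule out degenerate configurations in which components of $A_0(\mathbf{u}^*)$ meet along a measure-zero set, perhaps via a small perturbation exploiting the explicit polygonal description of the sets. Granting the step, $G + \varepsilon \le \Gamma_{\boldsymbol{\alpha}}^{\beta_1,\beta_2}(E_{n_k}) \to g_{\boldsymbol{\alpha}}^{\beta_1,\beta_2}(\mathbf{u}^*) \le G$ is a contradiction, and $U_n \to G$ follows.

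For the lower bound I would apply the same Hausdorff-continuity and component-matching reasoning to the midpoints $\mathbf{m}_{n_k} := P_{\textrm{mid}}(E_{n_k})$: these lie in $E_{n_k}$ and hence converge to $\mathbf{u}^*$, and since each $L_{\alpha_j}(\cdot)$ is Hausdorff-continuous in its vector argument, $A_0(\mathbf{m}_{n_k}) \to A_0(\mathbf{u}^*)$ in Hausdorff distance, which by the same component-matching gives $g_{\boldsymbol{\alpha}}^{\beta_1,\beta_2}(\mathbf{m}_{n_k}) \to g_{\boldsymbol{\alpha}}^{\beta_1,\beta_2}(\mathbf{u}^*) = G$. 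Since $L_n \ge g_{\boldsymbol{\alpha}}^{\beta_1,\beta_2}(\mathbf{m}_{n_k})$ for every $n \ge n_k$ and $L_n \le G$ always, we conclude $L_n \nearrow G$, completing the proof.
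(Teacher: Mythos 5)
Your route is genuinely different from the paper's, which is very terse: the paper asserts (``as one may easily check'') that both the gap $|\Gamma_{\boldsymbol{\alpha}}^{\beta_1,\beta_2}(E) - \sup_E g|$ and the oscillation $|\sup_E g - \inf_E g|$ tend to zero with the diameter of $E$, remarks that a valid splitting rule forces the diameter of the leading box to go to zero, and then cites Proposition~\ref{prop:lead-box}. You instead extract a nested chain of popped boxes via K\"onig's lemma, observe monotonicity of $U_n$ (which the paper never needs), and prove the convergence $\Gamma(E_{n_k})\to g(\mathbf{u}^*)$ pointwise along the chain by continuity of measure from above plus the component-matching argument. Both are valid organizing strategies; yours trades the paper's (unproved) uniform estimates for a compactness/subsequence argument, and in that sense is the more self-contained of the two for the upper bound. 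One small correction: the ``degenerate configurations in which components of $A_0(\mathbf{u}^*)$ meet along a measure-zero set'' that you flag as the main obstacle are not actually a problem. The connected components of the compact Nef polygon $A_0(\mathbf{u}^*)$ are, by definition of ``connected component,'' pairwise disjoint compact sets, and there are only finitely many of them, so they are separated by a strictly positive distance $d>0$; two blobs sharing a boundary point simply form a single component. Your component-matching step therefore closes cleanly without any perturbation.

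The genuine gap is in the lower-bound half. You write that since each $L_{\alpha_j}(\cdot)$ is Hausdorff-continuous in its translation argument, $A_0(\mathbf{m}_{n_k})\to A_0(\mathbf{u}^*)$ in Hausdorff distance. That inference is false in general: a finite intersection of sets each varying Hausdorff-continuously need not itself vary Hausdorff-continuously (two half-planes whose bounding lines become parallel is the standard example), and the function $g=\alcc\circ A_0$ is only upper semicontinuous, not continuous. So $g(\mathbf{m}_{n_k})\le g(\mathbf{u}^*)$ eventually (upper semicontinuity) but you have no a priori control from below, and without that you cannot conclude $L_n\nearrow G$. To be fair, the paper's own claim that the oscillation $|\sup_E g - \inf_E g|$ tends to zero with the diameter of $E$ glosses over exactly the same issue, so you are in good company, but the step you explicitly invoke is not a theorem and this part of your argument needs a different justification (or a restriction to the case where the supremum is attained at a continuity point of $g$). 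For the upper-bound convergence, which is the half of the theorem actually used elsewhere in the paper, your argument is complete and correct.
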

\begin{proof}
    As one may easily check, the upper bound used in the calculation under the assumption \eqref{eq:priority-map-sharpness}, $\Gamma_{\boldsymbol{\alpha}}^{\beta_1,\beta_2}(E)$, approaches the actual supremum of $g_{\boldsymbol{\alpha}}^{\beta_1,\beta_2}(\mathbf{u})$ over $E$ as the diameter of $E$ approaches zero. 
    That is $|\Gamma_{\boldsymbol{\alpha}}^{\beta_1,\beta_2}(E) - \sup\{g_{\boldsymbol{\alpha}}^{\beta_1,\beta_2}(\mathbf{u}):\mathbf{u}\in E\}|$ is bounded by
    a function of the diameter of $E$ that approaches zero when the diameter approaches zero.
    The same is true of the variation in each box, $|\sup\{g_{\boldsymbol{\alpha}}^{\beta_1,\beta_2}(\mathbf{u}):\mathbf{u}\in E\} - \inf\{g_{\boldsymbol{\alpha}}^{\beta_1,\beta_2}(\mathbf{u}):\mathbf{u}\in E\}|$.
    
    When using a valid splitting rule, the diameter of the leading box approaches zero as $n$ approaches infinity, and Proposition~\ref{prop:lead-box} completes the proof.
\end{proof}

As with the case of the choice of priority mapping and the value of the initial lower bound $\ell_0$, the specific choice of splitting rule to use is an implementation decision, and different choices can lead to algorithms with different performance. A simple choice we tried was to use the index of the coordinate with the largest variation within $E$ (i.e., the ``longest dimension'' of $E$). Another choice, which we found gives superior performance and is the rule currently used in our software implementation \texttt{SofaBounds}, is to let the splitting index be the value of $i$ maximizing
$\lambda(D_i\cap S(E))$, where $S(E)$ is the argument of $\alcc$ in \eqref{eq:defcalF},
and 
\begin{align*}
D_i = \begin{cases}
\displaystyle \bigcup_{u\in {I_{2j-1}}} \widehat{L}_{\alpha_j}(u,I_{2j}) \setminus \bigcap_{u\in {I_{2j-1}}} \widehat{L}_{\alpha_j}(u,I_{2j})
& \textrm{if }i=2j-1, \\[14pt]
\displaystyle 
\bigcup_{u\in {I_{2j}}} \widehat{L}_{\alpha_j}(I_{2j-1},u) \setminus \bigcap_{u\in {I_{2j}}} \widehat{L}_{\alpha_j}(I_{2j-1},u) & \textrm{if }i=2j.
\end{cases}
\end{align*}

\section{Explicit numerical bounds}

\label{sec:numerical}

We report the following explicit numerical bounds obtained by our algorithm, which we will then use to prove Theorems~\ref{thm:new-upperbound} and~\ref{thm:angle-bound}.

\begin{thm}
\label{thm:explicit-bounds}
Define angles
\begin{align*}
\alpha_1 &= \sin^{-1}\tfrac{7}{25} \approx 16.26^\circ, \\
\alpha_2 &= \sin^{-1}\tfrac{33}{65} \approx 30.51^\circ, \\
\alpha_3 &= \sin^{-1}\tfrac{119}{169} \approx 44.76^\circ, \\
\alpha_4 &= \sin^{-1} \tfrac{56}{65} = \pi/2-\alpha_2 \approx 59.59^\circ, \\
\alpha_5 &= \sin^{-1}\tfrac{24}{25} = \pi/2-\alpha_1 \approx 73.74^\circ, \\
\alpha_6 &= \sin^{-1} \tfrac{60}{61} \approx 79.61^\circ, \\
\alpha_7 &= \sin^{-1} \tfrac{84}{85} \approx 81.2^\circ.
\end{align*}
Then we have the inequalities
\begin{align}
G_{(\alpha_1,\alpha_2,\alpha_3,\alpha_4,\alpha_5)} &\le 
\bestupperbound, \label{eq:numerical-bound1} \\[5pt]
G_{(\alpha_1,\alpha_2,\alpha_3)}^{\alpha_4,\alpha_5} &\le 
2.21,
\label{eq:numerical-bound2}
\\
G_{(\alpha_1,\alpha_2,\alpha_3)}^{\alpha_5,\alpha_6} &\le 2.21,
\label{eq:numerical-bound3}
\\
G_{(\alpha_1,\alpha_2,\alpha_3,\alpha_4)}^{\alpha_6,\alpha_7} &\le 2.21.
\label{eq:numerical-bound4}
\end{align}
\end{thm}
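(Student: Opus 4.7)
The proof will be essentially a certification that running the algorithm of Section~\ref{sec:algorithm} on each of the four listed configurations terminates with an upper bound at or below the stated value. The first observation to make is that the angles $\alpha_1,\ldots,\alpha_7$ have been chosen so that $\sin\alpha_i$ and $\cos\alpha_i$ are rational (they come from the Pythagorean triples $(7,24,25)$, $(33,56,65)$, $(119,120,169)$, $(11,60,61)$, $(13,84,85)$). This is crucial, because it guarantees that every vertex of every Nef polygon arising in the computation of $\Gamma_{\boldsymbol{\alpha}}^{\beta_1,\beta_2}(E)$ has rational coordinates whenever the box $E$ does, so the entire computation (including evaluation of priorities, midpoints, splits, and $g_{\boldsymbol{\alpha}}^{\beta_1,\beta_2}$ at midpoints) can be carried out in exact rational arithmetic, removing any concern about floating-point roundoff.

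The plan, then, is as follows. For each of the four inequalities \eqref{eq:numerical-bound1}--\eqref{eq:numerical-bound4}, I invoke the Correctness Theorem for the branch-and-bound algorithm, which guarantees that at every reporting point the value $\varname{best\_upper\_bound\_so\_far}$ is a genuine upper bound for the corresponding $G_{\boldsymbol{\alpha}}^{\beta_1,\beta_2}$. So, it suffices to instantiate the algorithm with the specified $(\boldsymbol{\alpha},\beta_1,\beta_2)$, take $\Pi(E) = \Gamma_{\boldsymbol{\alpha}}^{\beta_1,\beta_2}(E)$ (which ensures asymptotic sharpness by Theorem~\ref{thm:asym-sharpness}), use the longest-side or $\lambda(D_i\cap S(E))$ splitting rule discussed at the end of Section~\ref{sec:algorithm}, and simply run it until $\varname{best\_upper\_bound\_so\_far}$ drops below the claimed numerical value. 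By Theorem~\ref{thm:asym-sharpness}, this must happen in finitely many steps as long as $G_{\boldsymbol{\alpha}}^{\beta_1,\beta_2}$ is strictly less than the target value; if the bound were not strict, the lower-bound output would catch up and give a contradiction. Each successful run constitutes a rigorous computer-assisted proof of the corresponding inequality.

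The main obstacle is purely computational. The bound \eqref{eq:numerical-bound1}, involving five angles and hence a $10$-dimensional search box, is by far the most expensive: it is precisely the computation referenced in the introduction that required roughly $480$ hours of single-core CPU time. The three bounds \eqref{eq:numerical-bound2}--\eqref{eq:numerical-bound4} use only three or four angles and a narrow butterfly window $B(\beta_1,\beta_2)$, which both reduces the dimension and (by Lemma~\ref{lem:supmax}, first case) shrinks the initial box $\Omega_{\boldsymbol{\alpha}}^{\beta_1,\beta_2}$ to a genuinely bounded region, making them much faster. A secondary but important implementation point is that $\Gamma_{\boldsymbol{\alpha}}^{\beta_1,\beta_2}(E)$ must actually be computed (or validly over-estimated) in practice: the sets $\widehat{L}_{\alpha_j}(I_{2j-1},I_{2j})$ are Nef polygons with rational vertices, their intersections with $H$ and $B(\beta_1,\beta_2)$ are again Nef polygons, and the largest connected component can be isolated and its area computed exactly using a Nef-polygon library such as the one used in \texttt{SofaBounds} (see the Appendix).

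In summary, the proof reduces to: (a) verifying once and for all that exact rational arithmetic applies to these angle choices, (b) invoking the algorithm's correctness theorem, and (c) running \texttt{SofaBounds} (or an equivalent implementation) on the four configurations with termination thresholds $2.37$, $2.21$, $2.21$, $2.21$ respectively. The non-routine ingredient is only the sheer amount of computation required for the first inequality; the logical structure is entirely supplied by the results of Section~\ref{sec:algorithm}.
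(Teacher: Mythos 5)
Your proposal is essentially the same as the paper's proof: the theorem is certified by running the branch-and-bound algorithm of Section~\ref{sec:algorithm} on each of the four $(\boldsymbol{\alpha},\beta_1,\beta_2)$ configurations in exact rational arithmetic (which is why the angles are chosen from Pythagorean triples), invoking the Correctness Theorem to conclude that the reported upper bound is a valid bound for $G_{\boldsymbol{\alpha}}^{\beta_1,\beta_2}$, and continuing until the reported value drops to or below the stated target. The paper's proof literally consists of pointing to the four saved profile files for \texttt{SofaBounds} and reporting the run times. One small mismatch with the actual implementation: you state that you take $\Pi(E) = \Gamma_{\boldsymbol{\alpha}}^{\beta_1,\beta_2}(E)$ so that Theorem~\ref{thm:asym-sharpness} guarantees termination, whereas the Appendix notes that \texttt{SofaBounds} actually uses the total area of the intersection (not just the largest connected component) as $\Pi(E)$, trading the theoretical sharpness guarantee for simpler code; this is harmless here because the Correctness Theorem only needs $\Pi(E) \ge \Gamma_{\boldsymbol{\alpha}}^{\beta_1,\beta_2}(E)$, and in practice the runs did terminate.
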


\begin{proof} Each of the inequalities \eqref{eq:numerical-bound1}--\eqref{eq:numerical-bound4} is certified as correct using the \texttt{SofaBounds} software package by invoking the \texttt{run} command from the command line interface after loading the appropriate parameters. For \eqref{eq:numerical-bound1}, the parameters can be loaded from the saved profile file \texttt{thm9-bound1.txt} included with the package (see the Appendix below for an illustration of the syntax for loading the file and running the computation). Similarly, the inequalities \eqref{eq:numerical-bound2}, \eqref{eq:numerical-bound3}, \eqref{eq:numerical-bound4} are obtained by running the software with the profile files \texttt{thm9-bound2.txt}, \texttt{thm9-bound3.txt}, and \texttt{thm9-bound4.txt}, respectively. Table~\ref{table:benchmarking} in the Appendix shows benchmarking results with running times for each of the computations.
\end{proof}

\begin{proof}[Proof of Theorem~\ref{thm:new-upperbound}]
For angles $0\le \beta_1<\beta_2 \le \pi/2$ denote 
$$M(\beta_1,\beta_2) = 
\sup_{\beta_1\le \beta \le \beta_2} \sofaconstbeta(\beta).$$
By \eqref{eq:sofaconst-beta0}, we have
\begin{equation} \label{eq:sofaconst-tworanges}
\sofaconst = 
M(\beta_0,\pi/2) =
\max\Big(
M(\beta_0,\alpha_5),
M(\alpha_5,\pi/2)
\Big).
\end{equation}
By Proposition~\ref{prop:sofa-fg-bounds}(iii), 
$M(\beta_0,\alpha_5)$
is bounded from above by $G_{(\alpha_1,\alpha_2,\alpha_3)}^{\alpha_4,\alpha_5}$, and by Proposition~\ref{prop:sofa-fg-bounds}(i), 
$M(\alpha_5,\pi/2)$
is bounded from above by $G_{(\alpha_1,\alpha_2,\alpha_3,\alpha_4,\alpha_5)}$. Thus, combining \eqref{eq:sofaconst-tworanges} with the numerical bounds \eqref{eq:numerical-bound1}--\eqref{eq:numerical-bound2} proves \eqref{eq:upperbound}.
\end{proof}

\begin{proof}[Proof of Theorem~\ref{thm:angle-bound}]
Using the same notation as in the proof of Theorem~\ref{thm:new-upperbound} above, we note that
$$
M(0,\alpha_7) 
= \max\Big(
M(0,\alpha_4), M(\alpha_4,\alpha_5), M(\alpha_5,\alpha_6), M(\alpha_6,\alpha_7)
\Big).
$$
Now, by Gerver's observation mentioned after the relation \eqref{eq:sofaconst-beta}, we have that $M(0,\alpha_4) \le \sec(\alpha_4) < \sec(\pi/3) = 2$. By Proposition~\ref{prop:sofa-fg-bounds}(iii) coupled with the numerical bounds \eqref{eq:numerical-bound2}--\eqref{eq:numerical-bound4}, the remaining three arguments
$M(\alpha_4,\alpha_5)$, $M(\alpha_5,\alpha_6)$, and $M(\alpha_6,\alpha_7)$ in the maximum are all bounded from above by $2.21$, so in particular we get that $M(0,\alpha_7)\le 2.21 < \gerverconst \approx 2.2195$. On the other hand, we have that
$$ \sofaconst = M(0,\pi/2) = \max\Big( M(0,\alpha_7), M(\alpha_7,\pi/2) \Big) \ge \gerverconst. $$
We conclude that $\sofaconst = M(\alpha_7,\pi/2)$ and that $\sofaconstbeta(\beta) \le 2.21 < \sofaconst$ for all $\beta<\alpha_7$. This proves that a moving sofa of maximal area has to undergo rotation by an angle of at least $\alpha_7$, as claimed.
\end{proof}

\section{Concluding remarks}

The results of this paper represent the first progress since Hammersley's 1968 paper \cite{hammersley} on deriving upper bounds for the area of a moving sofa shape. Our techniques also enable us to prove an improved lower bound on the angle of rotation a maximal area moving sofa shape must rotate through. Our improved upper bound of $\bestupperbound$ on the moving sofa constant comes much closer than Hammersley's bound to the best known lower bound $\gerverconst \approx 2.2195$ arising from Gerver's construction, but clearly there is still considerable room for improvement in narrowing the gap between the lower and upper bounds. In particular, some experimentation with the initial parameters used as input for the \texttt{SofaBounds} software should make it relatively easy to produce further (small) improvements to the value of the upper bound.

More ambitiously, our hope is that a refinement of our methods---in the form of theoretical improvements and/or speedups in the software implementation, for example using parallel computing techniques---may eventually be used to obtain an upper bound that comes very close to Gerver's bound, thereby providing supporting evidence to his conjecture that the shape he found is the solution to the moving sofa problem. Some supporting evidence of this type, albeit derived using a heuristic algorithm, was reported in a recent paper by Gibbs \cite{gibbs}. Alternatively, a failure of our algorithm (or improved versions thereof) to approach Gerver's lower bound may provide clues that his conjecture may in fact be false.

Our methods should also generalize in a fairly straightforward manner to other variants of the moving sofa problem. In particular, in a recent paper \cite{romik}, one of us discovered a shape with a piecewise algebraic boundary that is a plausible candidate to being the solution to the so-called \textbf{ambidextrous moving sofa problem}, which asks for the largest shape that can be moved around a right-angled turn \textit{either to the left or to the right} in a hallway of width~1 (Fig.~\ref{fig:romik-sofa}(a)). The shape, shown in Fig.~\ref{fig:romik-sofa}(b), has an area given by the intriguing explicit constant

\begin{align*}
\romikconst &=\sqrt[3]{3+2 \sqrt{2}}+\sqrt[3]{3-2 \sqrt{2}}-1 
+\arctan\left[
\frac{1}{2} \left( \sqrt[3]{\sqrt{2}+1}- \sqrt[3]{\sqrt{2}-1}\, \right)
  \right]
\nonumber \\ & \qquad\qquad\qquad = 1.64495521\ldots 
\end{align*}
As with the case of the original (non-ambidextrous) moving sofa problem, the constant $\romikconst$ provides a lower bound on the maximal area of an ambidextrous moving sofa shape; in the opposite direction, any upper bound for the original problem is also an upper bound for the ambidextrous variant of the problem, which establishes $\bestupperbound$ as a valid upper bound for that problem. Once again, the gap between the lower and upper bounds seems like an appealing opportunity for further work, so it would be interesting to extend the techniques of this paper to the setting of the ambidextrous moving sofa problem so as to obtain better upper bounds on the ``ambidextrous moving sofa constant.''

\begin{figure}
\begin{center}
\begin{tabular}{cc}
\scalebox{0.3}{\includegraphics{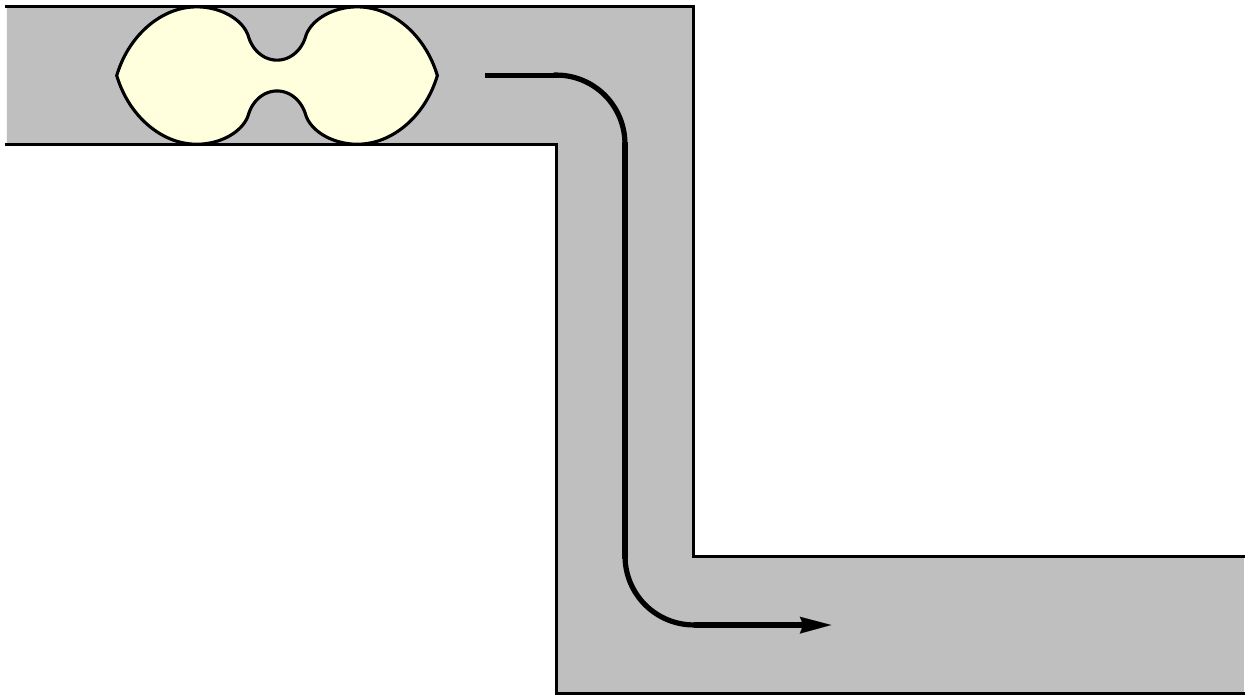}} & 
\raisebox{10pt}{\scalebox{0.5}{\includegraphics{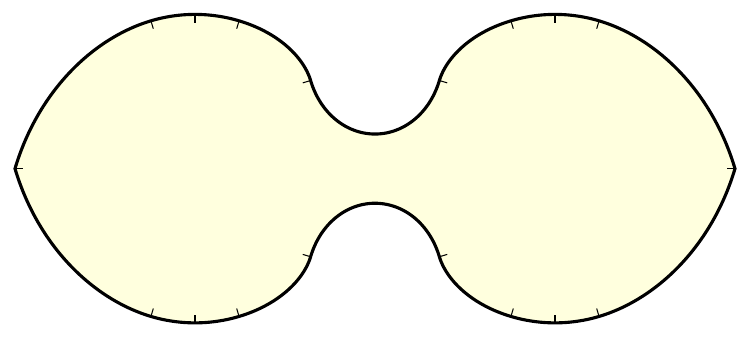}}}
\\[3pt]
(a) & (b)
\end{tabular}
\caption{(a) The ambidextrous moving sofa problem involves maximization of the area of moving sofa shapes that can navigate a hallway with right-angled turns going both ways, as shown in the figure; (b) a shape discovered by Romik \cite{romik} that was derived as a possible solution to the ambidextrous moving sofa problem. The boundary of the shape is a piecewise algebraic curve; the tick marks in the figure delineate the transition points between distinct parts of the boundary.}
\label{fig:romik-sofa}
\end{center}
\end{figure}

\section*{Appendix: The \texttt{SofaBounds} software}

We implemented the algorithm described in Section~\ref{sec:algorithm} in the software package \texttt{SofaBounds} we developed, which serves as a companion package to this paper and whose source code is available to download online \cite{sofabounds}. The package is a Unix command line tool written in \texttt{C++} and makes use of the open source computational geometry library \texttt{CGAL} \cite{cgal}. All computations are done in the exact rational arithmetic mode supported by \texttt{CGAL} to ensure that the bounds output by the algorithm are mathematically rigorous. For this reason, the software only works with angles $\gamma$ for which the vector $(\cos\gamma, \sin\gamma)$ has rational coordinates, i.e., is a rational point $(a/c,b/c)$ on the unit circle; clearly such angles are parametrized by Pythagorean triples $(a,b,c)$ such that $a^2+b^2=c^2$, and it is using such triples that the angles are entered into the program as input from the user. For example, to approximate an angle of $45$ degrees, we used the Pythagorean triple $(119,120,169)$, which corresponds to an angle of $\sin^{-1}(119/169) \approx 44.76^\circ$.

\texttt{SofaBounds} uses the \textbf{Nef polygon} geometric primitive implemented in \texttt{CGAL}. In Section~\ref{sec:algorithm} we mentioned that the set $\widehat{L}_\alpha(I,J)$ is a Nef polygon; consequently it is easy to see that all the planar sets manipulated by the algorithm belong to this family and can be readily calculated using elementary geometry and the features of \texttt{CGAL}'s Nef polygon sub-library \cite{seel}.

Our implementation uses the priority rule
$$
\Pi(E) = 
\lambda\left(
H \cap \bigcap_{j=1}^k 
\widehat{L}_{\alpha_j}(I_{2j-1},J_{2j})
\cap B(\beta_1,\beta_2)
\right)\text,
$$
i.e., we use the total area of the intersection as the priority instead of the area of the largest connected component as in \eqref{eq:defcalF}; this is slightly less ideal from a theoretical point of view, since Theorem~\ref{thm:asym-sharpness} does not apply, but simplified the programming and in practice probably results in better computational performance.

The software runs our algorithm on a single Unix thread, since the parts of the CGAL library we used are not thread-safe; note however that the nature of our algorithm lends itself fairly well to parallelization, so a multithreading or other parallelized implementation could yield a considerable speedup in performance, making it more practical to continue to improve the bounds in Theorems~\ref{thm:new-upperbound} and \ref{thm:angle-bound}.

To illustrate the use of the software, Listing~\ref{code-listing} shows a sample working session in which the upper bound $2.5$ is derived for $G_{\boldsymbol{\alpha}}$ with 
\begin{equation} \label{eq:angles-30-45-60}
\boldsymbol{\alpha}=\left(\sin^{-1}\frac{33}{65},\sin^{-1}\frac{119}{169}, \sin^{-1}\frac{56}{65}\right)
\approx (30.51^\circ, 44.76^\circ, 59.49^\circ).
\end{equation}

The numerical bounds \eqref{eq:numerical-bound1}--\eqref{eq:numerical-bound4} used in the proofs of Theorems~\ref{thm:new-upperbound} and~\ref{thm:angle-bound} were proved using \texttt{SofaBounds}, and required several weeks of computing time on a desktop computer. Table~\ref{table:benchmarking} shows some benchmarking information, which may be useful to anyone wishing to reproduce the computations or to improve upon our results.

\newcommand{\ignore}[1]{{}}
\begin{listing}
\begin{mdframed}[backgroundcolor=codebgcolor] 
\begin{alltt}
Users/user/SofaBounds$ \inputline{SofaBounds}
SofaBounds version 1.0

Type "help" for instructions.

> \inputline{load example-30-45-60.txt}
File 'example-30-45-60.txt' loaded successfully.
> \inputline{settings}

Number of corridors: 3

Slope 1:        33      56      65     (angle: 30.5102 deg)
Slope 2:       119     120     169     (angle: 44.7603 deg)
Slope 3:        56      33      65     (angle: 59.4898 deg)
Minimum final slope: 1 0 1	            (angle: 90 deg)
Maximum final slope: 1 0 1	            (angle: 90 deg)

Reporting progress every:		0.01 decrease in upper bound
> \inputline{run}
<iterations=0>
<iterations=1 | upper bound=3.754 | time=0:00:00> 
<iterations=7 | upper bound=3.488 | time=0:00:01> 
<iterations=9 | upper bound=3.438 | time=0:00:01> \vspace{7pt} 
\ignore{<iterations=13 | upper bound=3.428 | time=0:00:02> 
<iterations=14 | upper bound=3.416 | time=0:00:02> 
<iterations=16 | upper bound=3.405 | time=0:00:02> 
<iterations=18 | upper bound=3.397 | time=0:00:03> 
<iterations=19 | upper bound=3.380 | time=0:00:03> 
<iterations=24 | upper bound=3.360 | time=0:00:04>
<iterations=25 | upper bound=3.301 | time=0:00:04> 
<iterations=30 | upper bound=3.273 | time=0:00:05> 
<iterations=35 | upper bound=3.248 | time=0:00:06> 
<iterations=36 | upper bound=3.222 | time=0:00:07> 
<iterations=37 | upper bound=3.202 | time=0:00:07> 
<iterations=41 | upper bound=3.189 | time=0:00:08> 
<iterations=45 | upper bound=3.162 | time=0:00:09> 
<iterations=46 | upper bound=3.140 | time=0:00:09> 
<iterations=47 | upper bound=3.043 | time=0:00:09> 
<iterations=51 | upper bound=3.010 | time=0:00:10> 
<iterations=53 | upper bound=2.996 | time=0:00:11> 
<iterations=56 | upper bound=2.983 | time=0:00:11> 
<iterations=64 | upper bound=2.970 | time=0:00:14> 
<iterations=67 | upper bound=2.957 | time=0:00:14> 
<iterations=73 | upper bound=2.943 | time=0:00:16> 
<iterations=79 | upper bound=2.928 | time=0:00:18> 
<iterations=87 | upper bound=2.918 | time=0:00:20> 
<iterations=92 | upper bound=2.903 | time=0:00:21> 
<iterations=95 | upper bound=2.870 | time=0:00:22> 
<iterations=102 | upper bound=2.857 | time=0:00:24> 
<iterations=111 | upper bound=2.848 | time=0:00:27> 
<iterations=116 | upper bound=2.838 | time=0:00:28> 
<iterations=118 | upper bound=2.827 | time=0:00:29> 
<iterations=127 | upper bound=2.817 | time=0:00:31> 
<iterations=134 | upper bound=2.809 | time=0:00:34> 
<iterations=139 | upper bound=2.796 | time=0:00:35> 
<iterations=143 | upper bound=2.790 | time=0:00:36> 
<iterations=151 | upper bound=2.778 | time=0:00:38> 
<iterations=165 | upper bound=2.763 | time=0:00:42> 
<iterations=173 | upper bound=2.748 | time=0:00:44> 
<iterations=181 | upper bound=2.738 | time=0:00:47> 
<iterations=204 | upper bound=2.730 | time=0:00:53> 
<iterations=220 | upper bound=2.719 | time=0:00:58> 
<iterations=240 | upper bound=2.709 | time=0:01:03> 
<iterations=278 | upper bound=2.699 | time=0:01:14> 
<iterations=300 | upper bound=2.689 | time=0:01:21> 
<iterations=326 | upper bound=2.680 | time=0:01:28> 
<iterations=361 | upper bound=2.670 | time=0:01:38> 
<iterations=413 | upper bound=2.660 | time=0:01:53> 
<iterations=462 | upper bound=2.650 | time=0:02:07> 
<iterations=548 | upper bound=2.640 | time=0:02:31> 
<iterations=619 | upper bound=2.630 | time=0:02:52> 
<iterations=724 | upper bound=2.620 | time=0:03:22> 
<iterations=812 | upper bound=2.610 | time=0:03:48> 
<iterations=945 | upper bound=2.600 | time=0:04:27> 
<iterations=1100 | upper bound=2.590 | time=0:05:12> 
<iterations=1290 | upper bound=2.580 | time=0:06:07> 
<iterations=1513 | upper bound=2.570 | time=0:07:18> } \textnormal{[\textit{... 54 output lines deleted ...}]}\vspace{7pt} 
<iterations=1776 | upper bound=2.560 | time=0:08:43> 
<iterations=2188 | upper bound=2.550 | time=0:10:48> 
<iterations=2711 | upper bound=2.540 | time=0:13:23> 
<iterations=3510 | upper bound=2.530 | time=0:18:18> 
<iterations=4620 | upper bound=2.520 | time=0:24:54> 
<iterations=6250 | upper bound=2.510 | time=0:34:52> 
<iterations=8901 | upper bound=2.500 | time=0:50:45> 
\end{alltt}
\end{mdframed}
\caption{A sample working session of the \texttt{SofaBounds} software package proving an upper bound for $G_{\boldsymbol{\alpha}}$ with $\boldsymbol{\alpha}$ given by \eqref{eq:angles-30-45-60} . User commands are colored in \inputline{\textrm{blue}}.
The session loads parameters from a saved profile file \texttt{example-30-45-60.txt} (included with the source code download package) and rigorously certifies the number $2.5$ as an upper bound for 
$G_{\boldsymbol{\alpha}}$ (and therefore also for $\sofaconst$, by Proposition~\ref{prop:sofa-fg-bounds}(ii)) in about $50$ minutes of computation time on a laptop with a 1.3 GHz Intel Core M processor.
}
\label{code-listing}
\end{listing}

\begin{table}
\begin{center}
\begin{tabular}{|c|c|c|c|}
\hline
Bound & Saved profile file & Num.\ of iterations & Computation time \\
\hline
\eqref{eq:numerical-bound1} & \texttt{thm9-bound1.txt} &  7,724,162 & 480 hours \\
\eqref{eq:numerical-bound2} & \texttt{thm9-bound2.txt} &  \phantom{0,000,}917 & 2 minutes \\
\eqref{eq:numerical-bound3} & \texttt{thm9-bound3.txt} &  \phantom{,00}26,576 & 1:05 hours \\
\eqref{eq:numerical-bound4} & \texttt{thm9-bound4.txt} &  \phantom{0,}140,467 & 6:23 hours \\
\hline
\end{tabular}
\caption{Benchmarking results for the computations used in the proof of the bounds \eqref{eq:numerical-bound1}--\eqref{eq:numerical-bound4}. The computations for \eqref{eq:numerical-bound1} were performed on a 2.3 GHz Intel Xeon E5-2630 processor, and the computations for \eqref{eq:numerical-bound2}--\eqref{eq:numerical-bound4} were performed
were performed on a 3.4 GHz Intel Core i7 processor.}
\label{table:benchmarking}
\end{center}
\end{table}

Additional details on \texttt{SofaBounds} can be found in the documentation included with the package.

\clearpage

\end{document}